\newtheorem{theorem}{Theorem}[section]
\newtheorem{lemma}[theorem]{Lemma}
\newtheorem{corollary}[theorem]{Corollary}
\newtheorem{proposition}[theorem]{Proposition}
\theoremstyle{definition} 
\newtheorem{example}[theorem]{Example}
\theoremstyle{remark} \newtheorem*{remark}{Remark}
\newtheorem*{ack}{Acknowledgments} 
\numberwithin{equation} {section} \newcounter{temp}
\def\square{\RIfM@\bgroup\else$\bgroup\aftergroup$\fi
\vcenter{\hrule\hbox{\vrule\@height.6em\kern.6em\vrule}\hrule}\egroup}
\DeclareMathOperator{\Def}{Def} \DeclareMathOperator{\depth}{depth}
\DeclareMathOperator{\Der}{Der} 
 \DeclareMathOperator{\Ext}{Ext}
 \DeclareMathOperator{\GL}{GL}
 \DeclareMathOperator{\Hom}{Hom}
\DeclareMathOperator{\id}{id}\DeclareMathOperator{\Image}{Im}
 \DeclareMathOperator{\Ker}{Ker}
\DeclareMathOperator{\Proj}{Proj} 
 \DeclareMathOperator{\SL}{SL}
 \DeclareMathOperator{\sort}{sort}
\DeclareMathOperator{\Spec}{Spec}
\DeclareMathOperator{\Sym}{Sym}
\newcommand{\baseRing}[1]{\ensuremath{\mathbb{#1}}}
 \newcommand{\ZZ}{\baseRing{Z}}
\newcommand{\Sch}{\baseRing{S}}
\newcommand{\CO}{\mathcal{O}} \newcommand{\PP}{\baseRing{P}}
 \newcommand{\CS}{\mathcal{S}}
\newcommand{\CR}{\mathcal{R}}
\newcommand{\CQ}{\mathcal{Q}}
\newcommand{\GG}{\baseRing{G}}
\newcommand{\OG}{\baseRing{OG}}
\newcommand{\LG}{\baseRing{LG}}
\newcommand{\g}{\mathfrak{g}}
\newcommand{\spn}{\mathfrak{sp}}
\newcommand{\so}{\mathfrak{so}}
\begin{document}

\title{Vanishing cotangent cohomology for  Pl\"ucker
  algebras}\author{Jan Arthur Christophersen \and Nathan Owen Ilten} 
\date{\today}
\maketitle

\begin{abstract} We  use representation theory and Bott's theorem to
  show vanishing of higher cotangent cohomology modules for the homogeneous
  coordinate ring of Grassmannians in the Pl\"ucker embedding. As a
  biproduct we answer a question of Wahl about the cohomology of the
  square of the ideal sheaf for the case of Pl\"ucker relations. We obtain slightly weaker vanishing results for the cotangent cohomology of the coordinate rings of isotropic Grassmannians. 
\end{abstract}

\section*{Introduction} 
Fix a field $k$ of characteristic
zero. If $\GG = \GG(r,n)$ is the
Grassmannian of $r$-planes in an $n$-dimensional vector space over
$k$, let $A$ be the corresponding  Pl\"ucker algebra, i.e.\ $\GG =
\Proj A$ in the Pl\"ucker embedding. Set $d = n(n-r)+1$ to be the Krull
dimension of $A$. 
Let $T^i_A = T^i(A/k;A)$ denote the cotangent
cohomology modules of $A$. We show that if $\GG \ne \GG(2,4)$ then $T^i_A = 0$
  for all $1\le i \le d-1$. Moreover, $T^{d}_A = 0$ if and only if $\GG= \GG(2,n)$
or $\GG = \GG(n-2,n)$. We give an example,
$\GG(2,6)$, where $T^{d+1}_A \ne 0$. 

The case $T^d_A$ is of special interest since it is the vector space
dual of $H^0_\mathfrak{m}(\Omega_A)$, where $\mathfrak{m}$ is the
irrelevant maximal ideal and $\Omega_A$ is the module of K{\"a}hler
differentials. The degree 2 part of this is  isomorphic to the 
kernel of the Gaussian map $$\bigwedge^2 H^0(\GG,\CO_\GG(1)) \to
H^0(\GG, \Omega^1_\GG 
\otimes \CO_\GG(1)^2)$$ where $\CO_\GG(1)$ is for the  Pl\"ucker
embedding. We show that the graded pieces
$H^0_{\mathfrak{m}}(\Omega_A)_m = 0$ for $m \ne 2$, which is an affirmative
answer to a question by  Jonathan Wahl in the case $G/P$ is
  a Grassmannian. See Theorem \ref{h0mo} and the following remark.  

Since $A$ is Cohen-Macaulay and $\Spec A$ has one singular point at
$\mathfrak{m}$ we have isomorphisms $T^i_A \simeq \Ext^i_A(\Omega_A,
A)$. Because of the isolated singularity we furthermore get  $\Ext^i_A(\Omega_A,
A) \simeq H^{i+1}_\mathfrak{m}(\Der_k(A))$ for $1 \le i \le
d-2$. In general, the vanishing of these local cohomology modules in the case $X =
\Proj A$  is smooth is related to cohomology  vanishing for  twists of
$\CO_X$ and $\Theta_X$. Thus vanishing of $T^i_A$ in the range $1 \le i \le
d-2$ may be shown by proving vanishing of $H^i(X, \CO_X(m))$ and
$H^i(X, \Theta_X(m))$, a result originally shown by Svanes. For our
$\GG(r,n)$ we use Bott's theorem and an argument involving the Atiyah
extension to show  $H^{i+1}_\mathfrak{m}(\Der_k(A))= 0$ for $1 \le i \le
d-2$. See Section \ref{abouthder}. 

For the remaining two cases, by local duality we have $T^{d-i}_A
\simeq H^i_\mathfrak{m}(\Omega_A)^\ast$ for $i=0,1$. Here $M^\ast$
denotes the $k$-dual. If $G = \SL_r$ and $S =k[x_{ij}: 1 \le i \le n,
1 \le j \le r]$ is the ring of functions on the vector space of $n \times r$
matrices, then $A = S^G$. In Section \ref{genHm} we consider the general
situation where $S$ is a finitely generated standard graded
$k$-algebra with the action of a 
linearly reductive group $G$ respecting the grading. 

We must assume  that $\Spec
S^G$ has an isolated 
singularity at the irrelevant maximal ideal $\mathfrak{m} \subset
S^G$ and that both $\depth_{\mathfrak{m}S}
S\ge 2$ and $\depth_{\mathfrak{m}S}
\Omega_S\ge 2$. Under these conditions we exhibit a four term complex
of free $S[G]$-modules, which after taking invariants computes
$H^i_\mathfrak{m}(\Omega_{S^G})$ for $i=0,1$. This allows us to use
representation theory to compute the local cohomology. We do this for
our case using the combinatorics of Schur functors in Section
\ref{abouthm}. 

In the case of isotropic Grassmannians, we also understand enough about the tangent sheaf to apply Bott's theorem to get results similar to above, see Section \ref{sec:iso}.
Indeed, let $A$ be the coordinate ring of an isotropic Grassmannian $X$ in its Pl\"ucker embedding, not equal to the symplectic Grassmannian $\LG(3,6)$ of $3$-planes in a $6$-dimensional vector space. Then $T^i_A = 0$
  for all $2\le i \le d-3$. We show that $T^{d-2}_A = 0$ if and only if $X$ is $\LG(n-1,2n)$ or $\OG(n,2n+1)$.
Furthermore, $T^1_A = 0$ as long as $X$ is not an isotropic Grassmannian of $1$ or $2$-planes, or $\OG(4,8)$.

This work was motivated by our attempt to understand the smoothings of
certain degenerate Fano varieties in homogeneous spaces. In our last
Section we give an application regarding deformations of
complete intersections in cones over Grassmannians.

\begin{ack} We  would like to thank Steven Sam for helpful discussions on
  representation theory. 
\end{ack}

\section{Preliminaries} 
\subsection{Cotangent cohomology}
To fix notation we give a short description of the cotangent modules
and sheaves. For definitions, proofs and details on this cohomology
and its relevance to deformation theory see \cite{an:hom},
\cite{il:com} and \cite{la:for}.  Given a ring $R$ and an $R$-algebra $S$ there is a complex of free $S$
modules; the {\it cotangent complex} $\mathbb L_{\bullet}^{S/R}$. See
e.g.\ \cite[p. 34]{an:hom} for a definition.  

For an $S$ module $M$ we
get the {\it cotangent cohomology} modules $$T^i(S/R;M)
=H^i(\Hom_S(\mathbb L_{\bullet}^{S/R},M))$$ and  {\it cotangent
  homology} modules  $$T_i(S/R;M)
=H_i(\mathbb L_{\bullet}^{S/R} \otimes_S M) \,.$$ If $R$ is the ground field
we abbreviate $T^i(S/R;M)=T^i_S(M)$ and
$T^i_S(S)=T^i_S$. Correspondingly we will write $T_i^{S/R}$ for the
homology. There is a natural spectral sequence
$$\Ext^p_S(T_q^{S/R},M)  \Rightarrow T^{p+q}(S/R;M) $$
which will in our case allow us to compute $T^i_A$ as
$\Ext_A^i(\Omega_A, A)$. See Proposition \ref{ext}.

\subsection{Representation theory} 
 We review  our notation and 
some theory which we have taken from
\cite{fh:rep}, \cite{we:coh} and \cite{rww:loc}. A weight of the maximal torus of
diagonal matrices in  $\GL_n$ is an $n$-tuple  $\lambda  =
(\lambda_1, \dots , \lambda_n) \in \ZZ^n$. It is \emph{dominant} if
$\lambda_1 \ge \lambda_2 \ge \dots \ge \lambda_n$. We will often use
the shorthand
$\lambda = (n_1^{a_1}, \dots , n_k^{a_k})$ meaning $n_i$ is repeated
$a_i$ times in the tuple. If $\lambda$  is a dominant weight with
$\lambda_n \ge 0$ then $\lambda$ yields a partition of $m = \sum
\lambda_i$ and we denote this $\lambda
  \vdash m$. If it is clear that $\lambda$ is a partition then we
do not include the trailing zeros in the tuple.

Given an $n$-dimensional
vector space $E$ the irreducible
representations of $\GL_n \simeq \GL(E)$ are in one-to-one 
correspondence with the dominant weights. We write $\Sch_{\lambda} E$
for the corresponding Schur functor, i.e.\ the irreducible
representation associated to $\lambda$. We have $\Sch_{(1^r)} E =
\wedge^r E$, $\Sch_ \lambda E \otimes \wedge^n E = \Sch_{\lambda +
  (1^n)}$ and $\Sch_ \lambda E^\ast = \Sch_{(-\lambda_n, \dots ,
  -\lambda_1)} E$.  
If $E$ and $F$ are vector spaces we have the Cauchy formula for  $\Sym^k(E
\otimes F)$ as $\GL(E) \times \GL(F)$-representation, namely
$$\Sym^k(E \otimes F) = \bigoplus_{\lambda \vdash k} \Sch_\lambda E
\otimes \Sch_\lambda F \, .$$
This and several other standard combinatorial statements (which may be
found in the above mentioned literature) relating to the
Littlewood-Richardson rule and Young diagrams are used in Section
\ref{abouthm}. 

\subsection{Bott's theorem for the Grassmannian}
Let $\GG = \GG(r,E)$ be the
Grassmannian of $r$-dimensional subspaces of $E$ and let  $$0 \to
\mathcal{R} \to 
\CO_\GG \otimes E \to \CQ \to 0$$ be the tautological 
sequence on $\GG$.  By functoriality the Schur functors may be applied
to vector bundles on the $\GG$, in particular to the tautological sub and
quotient bundles $\CR$ and $\CQ$. 

We review Bott's theorem applied to $\GG$ as
described in \cite[Section 2.2]{rww:loc}. It will be used in Section
\ref{abouthder}. Consider two dominant weights $\alpha
= (\alpha_1, \dots ,\alpha_{n-r})$ and $\beta = (\beta_1, \dots
,\beta_{r})$  and their concatenation $\gamma = 
(\gamma_1,\dots,\gamma_r)$. Let $\delta = (n-1, \dots , 0)$ and
consider $\gamma + \delta$. Write $\sort(\gamma + \delta)$ for the sequence obtained
by arranging the entries of $\gamma + \delta$ in non-increasing order, and define
$\tilde{\gamma} = \sort(\gamma + \delta)- \delta$.

\begin{theorem}[Bott] \label{bott} With the above notation,
if $\gamma + \delta$ has repeated entries, then
$$H^i(\GG,\Sch_{\alpha} \CQ \otimes \Sch_{\beta} \CR )=0$$ for all $i
\ge 0$. Otherwise,
writing $l$ for the number of pairs $(i, j)$ with $1 \le i < j  \le n$
and $\gamma_i - i
< \gamma_j - j$ , we have$$H^l(\GG,\Sch_{\alpha} \CQ \otimes
\Sch_{\beta} \CR ) = \Sch_{\tilde{\gamma}} E$$
and
$H^i(\GG,\Sch_{\alpha} \CQ \otimes \Sch_{\beta} \CR )=0$ for $i \ne
l$. 
\end{theorem}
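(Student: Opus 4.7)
The plan is to deduce this restatement of Bott's theorem from the general Borel--Weil--Bott theorem for the partial flag variety $\GG = \GL(E)/P$, where $P$ is the parabolic subgroup stabilizing an $r$-dimensional subspace of $E$. The argument proceeds in three conceptual steps: identify the relevant sheaf with an equivariant bundle coming from a $P$-representation, apply Borel--Weil--Bott to the corresponding weight, and then match the combinatorial bookkeeping with the formulation in the statement.

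First I would translate $\Sch_{\alpha}\CQ \otimes \Sch_{\beta}\CR$ into the language of equivariant bundles. Under the equivalence between $\GL(E)$-equivariant vector bundles on $\GG$ and finite-dimensional $P$-representations, the tautological bundles $\CR$ and $\CQ$ are associated to the standard representations of the two factors of the Levi $L = \GL_r \times \GL_{n-r} \subset P$, extended trivially along the unipotent radical. By functoriality of Schur functors, $\Sch_{\alpha}\CQ \otimes \Sch_{\beta}\CR$ is then the equivariant bundle $\CL_\gamma$ associated to the irreducible $L$-representation of highest weight $(\alpha,\beta) = \gamma$.

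Next I would invoke Borel--Weil--Bott in the following form: for a weight $\lambda$ dominant for $L$, either $\lambda + \delta$ has a repeated coordinate, in which case $H^i(\GG, \CL_\lambda)=0$ for all $i$; or there is a unique permutation $w\in S_n$ of minimal length $l$ such that $w(\lambda+\delta)$ is strictly decreasing, in which case $H^l(\GG,\CL_\lambda)= \Sch_{w(\lambda+\delta)-\delta}E$ and all other cohomology groups vanish. Applied with $\lambda = \gamma$, this is exactly the assertion of the theorem, once I verify the combinatorial dictionary. With $\delta=(n-1,\dots,0)$ one has $(\gamma+\delta)_i = \gamma_i + (n-i)$; the minimal-length permutation putting these entries in strictly decreasing order has length equal to the number of pairs $(i,j)$ with $i<j$ and $\gamma_i + (n-i) < \gamma_j + (n-j)$, equivalently $\gamma_i - i < \gamma_j - j$. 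This is the integer $l$ of the statement, and $\sort(\gamma+\delta)-\delta$ is precisely $\tilde\gamma$.

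The main obstacle is of course Borel--Weil--Bott itself, which is a nontrivial theorem proved either via Kostant's computation of the Lie algebra cohomology of the nilpotent radical of $P$ with coefficients in the ambient $\g$-module, or via iterated $\PP^1$-fibrations in a Bott--Samelson resolution combined with a Leray spectral sequence argument. Since this paper is content to quote the result from \cite{we:coh} and \cite{rww:loc}, the genuine substance of the statement under review is the translation carried out in the first and third steps above, which must be consistent with the Schur-functor conventions used throughout the later sections.
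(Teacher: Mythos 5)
Your proposal is correct and matches what the paper does: the paper states this as a quoted form of Bott's theorem, citing \cite[Section 2.2]{rww:loc} and \cite{we:coh} rather than proving it, and your reduction to Borel--Weil--Bott for $\GL(E)/P$ together with the dictionary $(\gamma+\delta)_i=\gamma_i+n-i$, inversion count $l$, and $\tilde\gamma=\sort(\gamma+\delta)-\delta$ is exactly the standard derivation underlying those references. The combinatorial translation you verify is consistent with the conventions used later in the paper, so nothing further is needed.
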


We will also apply Bott's theorem to isotropic Grassmannians in Section \ref{sec:iso}. We refer the reader to \cite[4.3]{we:coh} for details.

\section{Computing higher cotangent cohomology}
We give here in successively more special cases the methods we will
use to compute the higher $T^i$.

\subsection{Cohen-Macaulay isolated singularities}

\begin{proposition} \label{ext} Let $(A,\mathfrak{m})$ be a
$d$-dimensional Cohen-Macaulay local $k$-algebra such that $\Spec A$
is an isolated singularity. Then
$$T^i_A\simeq \Ext_A^i(\Omega_A, A)$$ for
    $0 \le i \le d$.
\end{proposition}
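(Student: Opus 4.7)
The plan is to exploit the spectral sequence
\[
E_2^{p,q} = \Ext^p_A(T_q^{A}, A) \Rightarrow T^{p+q}_A
\]
already mentioned in the preliminaries, and to show that under the given hypotheses everything outside the $q=0$ row dies in the range of interest.

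First, I would record the base case $T_0^A = \Omega_A$, so the bottom row of the $E_2$-page is exactly $\Ext^p_A(\Omega_A, A)$. The next step is to exploit the isolated singularity hypothesis: cotangent homology commutes with localization, and $T_q^{A_\mathfrak{p}} = 0$ for all $q \ge 1$ whenever $A_\mathfrak{p}$ is a smooth $k$-algebra (see e.g.\ \cite{an:hom}). Since $A$ is smooth away from $\mathfrak{m}$, this forces $\supp(T_q^A) \subseteq \{\mathfrak{m}\}$ for all $q \ge 1$, so each $T_q^A$ with $q \ge 1$ has finite length.

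The key input from commutative algebra is then the standard fact that if $N$ is a finite length $A$-module and $A$ is Cohen--Macaulay of dimension $d$, then $\Ext^p_A(N, A) = 0$ for all $p < d$; this follows because $\sqrt{\Ann N} = \mathfrak{m}$ and $\depth_\mathfrak{m} A = d$. Applied to $N = T_q^A$ with $q \ge 1$, this kills the entire $E_2$-page in the strip $p < d$, $q \ge 1$.

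Finally, I would check the abutment on each diagonal $p + q = i$ with $i \le d$. The only potentially nonzero term is $E_2^{i,0} = \Ext^i_A(\Omega_A, A)$, since for $q \ge 1$ we have $p = i - q \le d-1 < d$. Moreover, all differentials $d_r \colon E_r^{i,0} \to E_r^{i+r,\,-r+1}$ vanish by negativity of the target $q$-coordinate, and the incoming differentials from $E_r^{i-r,\,r-1}$ have $p = i - r \le d - 2 < d$ with $q = r - 1 \ge 1$, hence land on zero terms. Therefore $E_2^{i,0} = E_\infty^{i,0}$ is the only associated graded piece of $T^i_A$, which yields the desired isomorphism $T^i_A \simeq \Ext^i_A(\Omega_A, A)$ for $0 \le i \le d$. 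I do not foresee a real obstacle; the only point that needs care is checking the diagonals $i = d$ and $i = d-1$, where the differentials could in principle connect the $q = 0$ row to the column $p = d$, but the sign of the shift rules this out.
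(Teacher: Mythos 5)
Your proposal is correct and is precisely the paper's argument: the authors invoke the same spectral sequence $\Ext^p_A(T_q^A,A)\Rightarrow T^{p+q}_A$ and note that the isolated-singularity and Cohen--Macaulay (depth) hypotheses kill $\Ext^p_A(T_q^A,A)$ for $q\ge 1$ and $p<d$. You have simply spelled out the support/finite-length reasoning and the bookkeeping of the differentials that the paper leaves implicit.
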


\begin{proof} Consider the spectral sequence $\Ext^p_A(T_q^A, A)
\Rightarrow T^n_A$ and note that by the depth condition
$\Ext^p_A(T_q^A, A)$ vanishes if $q \ge 1$ and $p < d$.
\end{proof}

\begin{lemma} \label{HDer} Let $(A,\mathfrak{m})$ be a $d$-dimensional
Cohen-Macaulay local $k$-algebra such that $\Spec A$ is an isolated
singularity. Then
$$\Ext_A^i(\Omega_A, A) \simeq H^{i+1}_\mathfrak{m}(\Der_k(A))$$
  for $1 \le i \le d-2$.
\end{lemma}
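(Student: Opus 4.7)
My plan is to construct a spectral sequence whose only potentially non-zero differentials are natural maps $\Ext^i_A(\Omega_A,A) \to H^{i+1}_\mathfrak{m}(\Der_k(A))$, and then to force these to be isomorphisms by comparing with a second spectral sequence.

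First I would take a resolution $F_\bullet \to \Omega_A$ by finitely generated free $A$-modules and a \v{C}ech complex $C^\bullet$ of $A$ computing local cohomology at $\mathfrak{m}$. On the first-quadrant double complex $K^{p,q} = \Hom_A(F_p, C^q)$, two spectral sequences converge to its total cohomology $\mathbb{H}^\bullet$. Filtering by columns, and using that each $F_p$ is finitely generated and free so $\Hom_A(F_p,-)$ commutes with cohomology, gives $E_2^{p,q} = \Ext^p_A(\Omega_A, H^q_\mathfrak{m}(A))$; the Cohen-Macaulay hypothesis forces $H^q_\mathfrak{m}(A) = 0$ for $q \ne d$, so $\mathbb{H}^n = 0$ for $n < d$. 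Filtering by rows instead, and using that localisation commutes with $\Ext^p_A(\Omega_A,-)$ applied to the finitely presented module $\Omega_A$, gives $E_2^{p,q} = H^q_\mathfrak{m}(\Ext^p_A(\Omega_A, A))$.

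The isolated-singularity hypothesis makes $\Omega_A$ locally free on the punctured spectrum, so $\Ext^p_A(\Omega_A, A)$ is supported at $\mathfrak{m}$ and, being finitely generated, is $\mathfrak{m}$-torsion for every $p \ge 1$. Thus the second $E_2$-page is concentrated on two axes: the column $p = 0$ reads $H^q_\mathfrak{m}(\Der_k(A))$, while the row $q = 0$ with $p \ge 1$ reads $\Ext^p_A(\Omega_A, A)$. A page-by-page inspection shows that the only possibly non-zero differentials are $d_{p+1} \colon \Ext^p_A(\Omega_A, A) \to H^{p+1}_\mathfrak{m}(\Der_k(A))$ for $p \ge 1$, with $E_\infty^{p,0} = \ker d_{p+1}$ and $E_\infty^{0,p+1} = \coker d_{p+1}$.

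Combining the two spectral sequences, the two-step filtration on $\mathbb{H}^n$ (one piece from each axis) together with the vanishing $\mathbb{H}^n = 0$ for $n < d$ forces $d_{i+1}$ to be injective whenever $\mathbb{H}^i = 0$ (i.e.\ $i \le d-1$) and surjective whenever $\mathbb{H}^{i+1} = 0$ (i.e.\ $i \le d-2$). In the range $1 \le i \le d-2$ both conditions hold, giving the required isomorphism $\Ext^i_A(\Omega_A, A) \simeq H^{i+1}_\mathfrak{m}(\Der_k(A))$. I expect the main technical point to be the bookkeeping of differentials to confirm that only the maps $d_{p+1}$ between the two axes can be non-zero; once this is in place, the rest is formal.
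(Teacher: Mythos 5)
Your argument is correct. Your first spectral sequence is exactly the one the paper uses: it computes the Ext with supports $\Ext^\bullet_\mathfrak{m}(\Omega_A,A)$ of SGA~2, and the Cohen--Macaulay hypothesis kills it below degree $d$. Where you diverge is in the second half. The paper passes to the punctured spectrum $U$, using the excision long exact sequence to get $\Ext^i_A(\Omega_A,A)\simeq \Ext^i_U(\Omega_U,\CO_U)\simeq H^i(U,\Theta_U)$ and then the standard comparison $H^i(U,\Theta_U)\simeq H^{i+1}_\mathfrak{m}(\Der_k(A))$ for $i\ge 1$; you instead extract the same conclusion from the second spectral sequence of your double complex, which degenerates to two axes because the higher $\Ext$ modules are finitely generated and $\mathfrak{m}$-torsion by the isolated-singularity hypothesis. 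The two packagings carry identical content --- the two-axis degeneration is the excision sequence in disguise --- but yours stays entirely inside commutative algebra and never mentions $U$ or $\Theta_U$, which is a clean alternative. One notational point: the convergent form of your second page is $E_2^{p,q}=H^p_\mathfrak{m}\bigl(\Ext^q_A(\Omega_A,A)\bigr)$ with $d_r\colon E_r^{p,q}\to E_r^{p+r,q-r+1}$, so the $\Ext$'s sit on the column $p=0$ and the transgressions $d_{q+1}\colon \Ext^q_A(\Omega_A,A)\to H^{q+1}_\mathfrak{m}(\Der_k(A))$ run exactly in the direction you assert; you wrote the indices transposed, but your kernel/cokernel bookkeeping and the final squeeze against $\mathbb{H}^n=0$ for $n<d$ are right.
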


\begin{proof} We will use $\Ext$ with support as described in SGA 2
Expos\`{e} VI (\cite{gr:sga2}), specifically
$\Ext^i_{\mathfrak{m}}(\Omega_A, A)$. Consider first the spectral
sequence $$\Ext^p_A(\Omega_A, H^q_{\mathfrak{m}} (A)) \Rightarrow
\Ext^n_{\mathfrak{m}}(\Omega_A, A)$$ which shows that
$\Ext^i_{\mathfrak{m}}(\Omega_A, A) = 0$ for $i < d$. There is a long
exact sequence
$$\cdots \to \Ext^i_{\mathfrak{m}}(\Omega_A, A)  \to
\Ext^i_A(\Omega_A, A) \to \Ext^i_{U}(\Omega_U, \CO_U) \to
\Ext^{i+1}_{\mathfrak{m}}(\Omega_A, A) \to \cdots$$ and it follows
that $\Ext^i_A(\Omega_A, A) \simeq \Ext^i_{U}(\Omega_U, \CO_U)$ for $i
\le d-2$. On the other hand $\Ext^i_{U}(\Omega_U, \CO_U) \simeq H^i(U,
\Theta_U)$ which again is isomorphic to
$H^{i+1}_\mathfrak{m}(\Der_k(A))$ for $i \ge 1$.
\end{proof}

\begin{proposition}\label{ticomp} Let $(A,\mathfrak{m})$ be a $d$-dimensional
Gorenstein local $k$-algebra with $d \ge 2$, such that $\Spec A$ is an
isolated singularity. Then
$$T^i_A\simeq \begin{cases} H^{i+1}_\mathfrak{m}(\Der_k(A)) \quad &\text{if
    $1 \le i \le d-2$ }\\ H^{1}_\mathfrak{m}(\Omega_A)^\ast \quad
&\text{if $i = d-1$ } \\ H^{0}_\mathfrak{m}(\Omega_A)^\ast \quad
&\text{if $i = d$.}
\end{cases}$$
\end{proposition}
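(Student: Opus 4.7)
The plan is to break the statement into two parts according to the range of $i$, handling the middle range with the results already proved in this section and handling the top two cases with local duality.

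For $1 \le i \le d-2$, the claim follows by simply composing the isomorphisms already at our disposal. Proposition \ref{ext} gives $T^i_A \simeq \Ext^i_A(\Omega_A, A)$ throughout $0 \le i \le d$, and Lemma \ref{HDer} converts the right-hand side into $H^{i+1}_\mathfrak{m}(\Der_k(A))$ in exactly this range. So the first case of the proposition is immediate and uses neither the Gorenstein hypothesis nor the assumption $d\ge 2$ beyond what is needed to make the range non-empty.

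For $i=d-1$ and $i=d$, the idea is to invoke local duality. Since $A$ is Gorenstein of dimension $d$, it is its own dualizing module (up to a shift), so for any finitely generated $A$-module $M$ one has an isomorphism $\Ext^i_A(M,A)\simeq H^{d-i}_\mathfrak{m}(M)^\vee$, where $\vee$ denotes Matlis duality. Applying this with $M=\Omega_A$ and combining with Proposition \ref{ext} yields $T^{d-1}_A\simeq H^1_\mathfrak{m}(\Omega_A)^\vee$ and $T^d_A\simeq H^0_\mathfrak{m}(\Omega_A)^\vee$.

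The only step requiring care is to replace the Matlis dual by the plain $k$-dual, which is what the statement uses. This is legitimate provided $H^0_\mathfrak{m}(\Omega_A)$ and $H^1_\mathfrak{m}(\Omega_A)$ have finite length, and the isolated singularity hypothesis delivers exactly this: since $\Omega_A$ is locally free on the punctured spectrum $U=\Spec A\setminus\{\mathfrak{m}\}$, the local cohomology modules $H^i_\mathfrak{m}(\Omega_A)$ for $0\le i\le d-1$ are supported at $\mathfrak{m}$ and of finite length. I expect this bookkeeping about Matlis versus $k$-dual to be the only real subtlety; everything else is assembly of results already in place. Once it is noted, the two remaining isomorphisms in the proposition follow at once.
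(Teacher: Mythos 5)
Your proposal is correct and follows exactly the paper's route: Proposition \ref{ext} plus Lemma \ref{HDer} for $1\le i\le d-2$, and local duality (via the Gorenstein hypothesis) for $i=d-1,d$. The paper's proof is a one-line citation of these ingredients; your additional remark that the isolated-singularity hypothesis forces $H^0_\mathfrak{m}(\Omega_A)$ and $H^1_\mathfrak{m}(\Omega_A)$ to have finite length, so that the Matlis dual agrees with the $k$-dual, is a correct and welcome elaboration of a point the paper leaves implicit.
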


\begin{proof} This follows directly from Proposition \ref{ext}, Lemma
\ref{HDer} and local duality.
\end{proof}

\subsection{Computing $H^i_\mathfrak{m}(\Der_k(A))$ for cones over
  projective schemes} 
Some of the ideas in this section were used by Svanes and Schlessinger
and may be found in
\cite{sv:som} and \cite{sc:rig}. We believe our approach is more direct and gives
more than the vanishing of the cohomology. To use Proposition
\ref{ticomp} we need to compute the local cohomology of the derivation
module. For cones over projective schemes $X$ we may relate this to
the sheaf cohomology of twists of $\CO_X$ and $\Theta_X$. 

Let $A$ be a standard graded $k$-algebra, i.e.\ the algebra generators
are in degree $1$. Let $X= \Proj A$ with irrelevant maximal ideal
$\mathfrak{m}$. Let 
$X^\prime = \Spec A \setminus V(\mathfrak{m})$, $\pi : X^\prime \to X$
the $\GG_m$ quotient and set $\CS = \pi_{\ast}\CO_{X^\prime}$ a sheaf
of graded algebras on $X$ with $\CS_0 = \CO_X$. Let $\Theta_ \CS$ the
sheaf which is locally $\Der_k(\CS(U))$ on $X$, i.e.\ $\Theta_ \CS =
\pi_\ast \Theta_{X^\prime}$. Then $\Theta_ \CS$ is a sheaf of graded
$\CS$-modules so let $\mathcal{E}$ be the degree 0 part.

If $\CS(U) = B$, so that $\CO_X(U) = B_0$ then the sequence
$$0 \to \Der_{B_0}(B) \to \Der_k(B) \to \Der_k(B_0,B) \to 0$$
is exact since $B$ is smooth over $B_0$. Moreover the Euler derivation
gives a graded isomorphism $B \simeq \Der_{B_0}(B)$. This globalizes
to an exact sequence $$0 \to \CS \to \Theta_\CS \to \Theta_X
\otimes_{\CO_X} \CS \to 0$$ and taking the degree $0$ part we
get \begin{equation}\label{eseq} 0 \to \CO_X \to \mathcal{E} \to
\Theta_X \to 0 \, .\end{equation} This sequence is locally
$$0 \to B_0\to \Der_k(B)_0 \to \Der_k(B_0) \to 0$$
so we see that $\mathcal{E} \simeq \widetilde{\Der_k(A)}$. Recall that
by comparing the \v{C}ech complex of 
$\widetilde{M}$ over $\Proj A$ and the complex computing $H^i_{\mathfrak{m}}(M)$
we get $\bigoplus_{m} H^i(\Proj A,
\widetilde{M}(m)) \simeq H^{i+1}_{\mathfrak{m}}(M)$ when $i\ge
1$. Thus we have proven 
\begin{lemma}\label{elemma} There are isomorphisms
$H^i_\mathfrak{m}(\Der_k(A)) \simeq \bigoplus_{m \in \ZZ}H^{i-1}(X,
\mathcal{E}(m))$ for $i \ge 2$.
\end{lemma}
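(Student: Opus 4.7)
The plan is to combine two ingredients that are already in place just before the lemma statement. The first is the identification $\mathcal{E} \simeq \widetilde{\Der_k(A)}$ coming from taking the degree-zero part of the Euler sequence on the affine cone minus its vertex. The second is the classical comparison between the sheaf cohomology of twists of a coherent sheaf $\widetilde{M}$ on $\Proj A$ and the local cohomology of $M$ at the irrelevant maximal ideal, obtained by comparing the \v{C}ech complex of $\widetilde{M}$ on $\Proj A$ with the \v{C}ech complex computing $H^\bullet_{\mathfrak{m}}(M)$.

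More concretely, I would proceed as follows. For any finitely generated graded $A$-module $M$, since $A$ is standard graded one has $\widetilde{M(m)} = \widetilde{M}(m)$, and the comparison theorem yields isomorphisms
$$\bigoplus_{m \in \ZZ} H^i\bigl(\Proj A, \widetilde{M}(m)\bigr) \simeq H^{i+1}_{\mathfrak{m}}(M) \qquad (i \ge 1),$$
which is exactly the statement recalled in the paragraph above the lemma. Applying this to $M = \Der_k(A)$ and substituting $\widetilde{\Der_k(A)} = \mathcal{E}$ gives
$$\bigoplus_{m \in \ZZ} H^i(X, \mathcal{E}(m)) \simeq H^{i+1}_{\mathfrak{m}}(\Der_k(A)) \qquad (i \ge 1),$$
and a shift $i \mapsto i-1$ produces the stated range $i \ge 2$.

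The only point that really needs checking is that the $\CO_X$-module twist $\mathcal{E}(m)$ on the sheaf side matches the internal degree shift of $\Der_k(A)$ as a graded $A$-module. This is built into the construction of $\mathcal{E}$: the sheaf $\Theta_{\CS}$ is a graded $\CS$-module with degree-zero part $\mathcal{E}$, so its degree-$m$ piece is naturally $\mathcal{E} \otimes_{\CO_X} \CS_m \simeq \mathcal{E}(m)$, and this agrees under $\widetilde{(\cdot)}$ with the grading on $\Der_k(A)$ inherited from the $\GG_m$-action on $X'$. There is no substantial obstacle here; the lemma is essentially a bookkeeping statement packaging two standard facts, and the hardest item is simply being careful about what the two notations $(m)$ mean.
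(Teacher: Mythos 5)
Your proposal is correct and follows exactly the route the paper takes: the lemma is stated as "thus we have proven" immediately after establishing $\mathcal{E} \simeq \widetilde{\Der_k(A)}$ and recalling the standard comparison $\bigoplus_{m} H^i(\Proj A, \widetilde{M}(m)) \simeq H^{i+1}_{\mathfrak{m}}(M)$ for $i \ge 1$, which is precisely your two ingredients. Your extra remark on matching the twist $\mathcal{E}(m)$ with the internal grading of $\Der_k(A)$ is a reasonable bookkeeping check that the paper leaves implicit.
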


\begin{proposition}[\cite{sv:som} Remark 2.5]\label{svanes}  Assume
  $X = \Proj A$ is smooth and 
  $1 \le j \le \dim X -1$. If $$H^i(X, \CO_X(m)) = H^i(X, \Theta_X(m)) = 0$$ for all $m$
  and all $1 \le i \le j$, then $T^i_A = 0$ for all $1 \le i \le j$.
\end{proposition}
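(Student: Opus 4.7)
The plan is to translate the claimed $T^i$-vanishing into a sheaf cohomology vanishing statement for $\mathcal{E}$ via Lemma~\ref{elemma} and Proposition~\ref{ticomp}, and then obtain the latter from the hypotheses by a long exact sequence chase on~(\ref{eseq}).

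Concretely, I would first twist~(\ref{eseq}) by $\CO_X(m)$ for each $m\in\ZZ$ to obtain
$$0 \to \CO_X(m) \to \mathcal{E}(m) \to \Theta_X(m) \to 0,$$
and read off its long exact cohomology sequence. There $H^i(X,\mathcal{E}(m))$ is sandwiched between $H^i(X,\CO_X(m))$ and $H^i(X,\Theta_X(m))$, with connecting homomorphisms arriving from $H^{i-1}(X,\Theta_X(m))$ and departing to $H^{i+1}(X,\CO_X(m))$. The assumption forces both outer groups to vanish for $1\le i\le j$ and every $m$, so $H^i(X,\mathcal{E}(m))=0$ throughout this range.

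Next, Lemma~\ref{elemma} supplies the graded identification
$$H^{i+1}_\mathfrak{m}(\Der_k(A))\simeq\bigoplus_{m\in\ZZ}H^i(X,\mathcal{E}(m))$$
valid for $i\ge 1$, which immediately converts the above into $H^{i+1}_\mathfrak{m}(\Der_k(A))=0$ for $1\le i\le j$. Since $X$ is smooth projective, $\Spec A$ has at worst an isolated singularity at $\mathfrak{m}$, and $j\le\dim X-1=d-2$ lies in the range where Proposition~\ref{ticomp} identifies $T^i_A$ with $H^{i+1}_\mathfrak{m}(\Der_k(A))$. This delivers $T^i_A=0$ for $1\le i\le j$.

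The main obstacle is essentially absent at this point: all the machinery has already been assembled in Lemma~\ref{HDer}, Lemma~\ref{elemma}, and the exact sequence~(\ref{eseq}), and the argument reduces to a routine three-term cohomology chase followed by the translation from sheaf cohomology on $X$ to local cohomology at $\mathfrak{m}$. If any minor care is required, it lies in confirming that the Cohen--Macaulay (and Gorenstein) hypotheses of Proposition~\ref{ticomp} are available in the ambient setting; in the range $1\le i\le d-2$ at issue here the weaker combination of Proposition~\ref{ext} and Lemma~\ref{HDer}, which needs only Cohen--Macaulayness, already suffices to conclude.
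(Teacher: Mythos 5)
Your proposal is correct and follows exactly the route the paper takes: its proof of this proposition is a one-line citation of Proposition~\ref{ticomp}, the exact sequence~\eqref{eseq}, and Lemma~\ref{elemma}, which is precisely the chain you spell out (twist~\eqref{eseq}, chase the long exact sequence, apply Lemma~\ref{elemma}, then identify $T^i_A$ with $H^{i+1}_{\mathfrak m}(\Der_k(A))$ in the range $i\le d-2$). Your closing remark that Proposition~\ref{ext} plus Lemma~\ref{HDer} suffice in this range is a fair observation about the implicit Cohen--Macaulay/Gorenstein hypotheses, which the paper leaves tacit as well.
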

\begin{proof} This follows  from Proposition \ref{ticomp}, the
  exact sequence \eqref{eseq} and Lemma \ref{elemma}.
\end{proof}

In our application we will need to prove that $T^{d-2}_A = 0$
even though not all $H^{d-2}(X, \Theta_X(m))$ vanish. For this we need to
understand the sequence \eqref{eseq} better. For any scheme there is a
natural map $\mathcal{O}_{X}^{*} \to 
\Omega^{1}_{X}$ defined locally by
$$
u \mapsto \frac{du}{u} \, .$$ Let $c: H^{1}(X, \mathcal{O}_{X}^{*})
\to H^{1}(X, \Omega^{1}_{X})$ be the induced map in cohomology. Now $
H^{1}(X, \Omega^{1}_{X}) \simeq \Ext^{1}(\mathcal{O}_{X},
\Omega^{1}_{X})$, so for a line bundle $L$, $c(L)$ gives an extension
\begin{equation*}
\label{ex} e_{L}: \quad 0 \to \Omega^{1}_{X} \to \mathcal{F}_{L} \to
\mathcal{O}_{X} \to 0 \, .
\end{equation*} Set $\mathcal{E}_{L} := \mathcal{F}_{L}^{\vee}$ and
note that the dual sequence
$$e_{L}^\vee: \quad
0 \to \mathcal{O}_{X} \to \mathcal{E}_{L} \to \Theta_{X} \to 0 $$ is
also exact. In the smooth case this is known as the Atiyah extension
associated to $L$, but we will call it that for general $X$.

We state
and prove for lack of reference (in this generality) the 
certainly well known
\begin{proposition} \label{at} If $X = \Proj A$ and $L = \CO_X(1) =
\widetilde{A(1)}$ then the sequence
$$0 \to \CO_X \to \mathcal{E} \to
\Theta_X \to 0$$ is the Atiyah extension $e_L^\vee$.
\end{proposition}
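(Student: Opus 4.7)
The plan is to compute the classes of both extensions in $\Ext^1(\Theta_X,\CO_X) = H^1(X,\Omega^1_X)$ by \v{C}ech cohomology on a common affine cover and show that they coincide. Choose degree-one generators $f_1,\dots,f_N \in A_1$ of $A$ and set $U_i = D_+(f_i)$, so the $U_i$ cover $X$. Write $B_0 = (A_{f_i})_0 = \CO_X(U_i)$ and let $E$ denote the Euler derivation on the localizations of $A$.

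Over $U_i$ there is a canonical splitting $\sigma_i : \Theta_X|_{U_i} \to \mathcal{E}|_{U_i}$. Indeed, the kernel of the restriction $\Der_k(A_{f_i})_0 \to \Der_k(B_0)$ is spanned over $B_0$ by $E$, and since $E(f_i) = f_i$, every $D \in \Der_k(B_0)$ admits a unique degree-zero lift $\tilde D_i$ to $A_{f_i}$ satisfying $\tilde D_i(f_i) = 0$. Set $\sigma_i(D) := \tilde D_i$. Identifying $\CO_X \hookrightarrow \mathcal{E}$ via $1 \mapsto E$, on $U_i \cap U_j$ the difference $\sigma_j - \sigma_i$ equals $c_{ij}(D) \cdot E$ with $c_{ij}(D) = \tilde D_j(f_i)/f_i$. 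Applying the Leibniz rule to $f_j/f_i \in B_0$ and using $\tilde D_j(f_j)=0$ together with the fact that both lifts restrict to $D$ on $B_0$, I expect to obtain
$$c_{ij}(D) = -\frac{D(f_j/f_i)}{f_j/f_i}.$$
Under the canonical identification $\Hom(\Theta_X,\CO_X) \cong \Omega^1_X$, the \v{C}ech cocycle representing the class of $\mathcal{E}$ is therefore $\{-d\log(f_j/f_i)\}$.

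On the other side, the generators $f_i \in H^0(X,\CO_X(1))$ trivialize $L = \CO_X(1)$ on $U_i$ with transition functions $g_{ij} = f_j/f_i$, so by definition the Atiyah class $c(L) \in H^1(X,\Omega^1_X)$ is represented by the cocycle $\{dg_{ij}/g_{ij}\} = \{d\log(f_j/f_i)\}$. Passing to the dual sequence $e_L^\vee$ absorbs the sign coming from the standard duality of extensions, so the two classes in $\Ext^1(\Theta_X,\CO_X)$ agree and the extensions are isomorphic as extensions of $\Theta_X$ by $\CO_X$.

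The hardest step will not be the cocycle computation itself but the consistent bookkeeping of sign conventions: the identification $\CO_X \cdot E \cong \CO_X$ via the inclusion in $\mathcal{E}$, the isomorphism $\Hom(\Theta_X,\CO_X) \cong \Omega^1_X$, the \v{C}ech convention used to read an extension class from a pair of local splittings, and the sign introduced by duality between $e_L$ and $e_L^\vee$ all have to be fixed compatibly so that the computed cocycles genuinely coincide rather than differing by an overall sign.
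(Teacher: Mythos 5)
Your argument is correct and follows essentially the same route as the paper: both proofs split the $\mathcal{E}$-sequence locally by lifting $D\in\Der_k(B_0)$ to the unique degree-zero derivation annihilating the chosen degree-one element (the paper's $\widetilde D$ satisfies $\widetilde D(x_0)=0$, which is exactly your $\sigma_i$), and then compare \v{C}ech cocycles against the transition functions $f_j/f_i$ of $\CO_X(1)$. Your Leibniz computation giving $c_{ij}(D)=-D(f_j/f_i)/(f_j/f_i)$ checks out, and you carry out the final cocycle comparison (including the sign from dualizing $e_L$) more explicitly than the paper, which leaves that step to the reader.
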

\begin{proof} Let $x_0, \dots x_n$ be a basis for $H^0(X, \CO_X(1))$
so we may realize $X$ in $\PP^n$. Set $B= A_{(x_0)} = k[x_0, \dots,
x_n, x_0^{-1}]/I$ for some ideal $I$. Then $B_0= k[y_1, \dots ,
y_n]/J$ where $J$ is generated by the $f(1, y_1, \dots, y_n)$ with $f
\in I$ and the inclusion is given by $y_i \mapsto x_ix_0^{-1}$. For a
homogeneous $f \in B_d$
\begin{equation}\label{dehom} f(x_0, \dots , x_n) = x_0^df(1,
\frac{x_1}{x_0}, \dots, \frac{x_n}{x_0})
\end{equation}

Write $\partial_x$ for the partial derivative of a variable $x$. A
derivation $D \in \Der(B_0)$ can be written $D = \sum_i
a_i \partial_{y_i}$ where the $a_i$ are such that $D(f)= 0$ in $B_0$
for all $f \in J$. From $D$ we can form
$$\widetilde{D} = \sum_{i=1}^n
  a_i(\frac{x_1}{x_0}, \dots, \frac{x_n}{x_0}) \cdot
x_0 \partial_{x_i}\, .$$ Using \eqref{dehom} one can check that
$\widetilde{D}$ is a well defined derivation of $B$. It is clearly of
degree $0$. Moreover one may compute that for $D \in \Der(B)_0$
$$D - \widetilde{D_{\mid B_0}} = g \sum_{i=0}^n x_i \partial_{x_i}$$
for suitable $g$.  This implies that \eqref{eseq} is locally split.

The sequence $e^\vee_{L}$ is also locally split and we may write
$\mathcal{E}_{L}$ locally on $U_{i}$ as $\mathcal{O}_{U_{i}} \oplus
\Theta_{U_{i}}$. Let $L$ be represented by a \v{C}ech cocycle
$(f_{ij})$, $f_{ij} \in \Gamma(U_{ij}, \mathcal{O}^{*}_{X})$.  The
gluing of $\mathcal{E}_{L}$ is determined (dually) by the extension
class in $H^{1}(\Omega^{1}_{X})$; $(g_{i}, D_{i}) \in \Gamma(U_{i},
\mathcal{E}_{L})$ and $ (g_{j}, D_{j}) \in \Gamma(U_{j},
\mathcal{E}_{L})$ are equal on $U_{ij}$ iff $D_{i} = D_{j}$ and $g_{j}
- g_{i} = D_{i}(f_{ij})/f_{ij}$. Now use the above local splitting to
show that when $L=\CO_X(1)$ we have $\mathcal{E} \simeq
\mathcal{E}_{L}$.
\end{proof}

\subsection{Computing $H^0_\mathfrak{m}(\Omega_{S^G})$ and
$H^1_\mathfrak{m}(\Omega_{S^G})$ for invariant rings} \label{genHm}
Let $S$ be a finitely generated standard graded $k$-algebra with the action of a
linearly reductive group $G$ respecting the grading. Assume $\Spec
S^G$ has an isolated 
singularity at $\mathfrak{m} \subset S^G$. If $J = \mathfrak{m}S$
assume that $\depth_J 
S\ge 2$ and that  $\depth_J 
\Omega_S\ge 2$. 

Let $H$ be the kernel of the map $\Omega_{S^G} 
\otimes_{S^G} S \to \Omega_S$.  
\begin{lemma} There are isomorphisms $$H^0_\mathfrak{m}(\Omega_{S^G})
\simeq H^G \quad \text{and} \quad
H^1_\mathfrak{m}(\Omega_{S^G}) \simeq H^0_J(\Omega_{S/S^G} )^G \, .$$
\end{lemma}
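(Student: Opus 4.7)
My approach is to compute $H^i_\mathfrak{m}(\Omega_{S^G})$ for $i=0,1$ by pushing everything up to the $S$-module $\Omega_{S^G} \otimes_{S^G} S$ and exploiting linear reductivity of $G$.

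First, I would establish for every $i$ the isomorphism
$$H^i_\mathfrak{m}(\Omega_{S^G}) \simeq H^i_J(\Omega_{S^G} \otimes_{S^G} S)^G,$$
and analogously $H^i_J(H)^G \simeq H^i_\mathfrak{m}(H^G)$. Both follow because $\mathfrak{m}$ and $J=\mathfrak{m}S$ are generated by the same elements $f_1,\dots,f_n \in S^G$, so the \v{C}ech complexes computing $H^i_J(-)$ are $G$-equivariant and their $G$-invariants are precisely the \v{C}ech complexes computing $H^i_\mathfrak{m}$ of the $S^G$-module of invariants; linear reductivity lets $(-)^G$ commute with cohomology. I also use $(\Omega_{S^G} \otimes_{S^G} S)^G = \Omega_{S^G}$.

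Next I split the four-term exact sequence $0 \to H \to \Omega_{S^G} \otimes_{S^G} S \to \Omega_S \to \Omega_{S/S^G} \to 0$ into two short exact sequences via the image $K$, and apply $H^\bullet_J$. The hypothesis $\depth_J \Omega_S \ge 2$ kills $H^0_J(\Omega_S)$ and $H^1_J(\Omega_S)$; from $0 \to K \to \Omega_S \to \Omega_{S/S^G} \to 0$ this gives $H^0_J(K)=0$ and $H^1_J(K) \simeq H^0_J(\Omega_{S/S^G})$. Feeding this into the long exact sequence of $0 \to H \to \Omega_{S^G} \otimes_{S^G} S \to K \to 0$ yields $H^0_J(\Omega_{S^G} \otimes_{S^G} S) \simeq H^0_J(H)$ together with an exact four-term sequence relating $H^1_J(H)$, $H^1_J(\Omega_{S^G} \otimes_{S^G} S)$, $H^0_J(\Omega_{S/S^G})$, and $H^2_J(H)$.

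The main obstacle, I expect, is showing that $H^G$ is $\mathfrak{m}$-torsion. By exactness of $(-)^G$ one has $H^G = \ker(\Omega_{S^G} \to (\Omega_S)^G)$, i.e., the kernel of the canonical map $\Omega_{S^G} \to \Omega_S$. At the generic point this becomes $\Omega_{\operatorname{Frac}(S^G)/k} \to \Omega_{\operatorname{Frac}(S)/k}$, which is injective since the field extension is separable in characteristic zero. Since $\Spec S^G$ has an isolated singularity at $\mathfrak{m}$, $\Omega_{S^G}$ is locally free, hence torsion-free, on $U = \Spec S^G \setminus V(\mathfrak{m})$, so generic injectivity upgrades to injectivity on $U$. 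Thus $H^G$ is a finitely generated $S^G$-module supported at $\mathfrak{m}$, and is therefore annihilated by a power of $\mathfrak{m}$. For any such module $M$ the \v{C}ech complex at $\mathfrak{m}$ collapses to $M$ in degree zero, so $H^0_\mathfrak{m}(M) = M$ and $H^i_\mathfrak{m}(M)=0$ for $i \ge 1$. Taking $G$-invariants of the sequences from the previous paragraph and applying the identifications from the first step gives $H^0_\mathfrak{m}(\Omega_{S^G}) \simeq H^0_\mathfrak{m}(H^G) = H^G$; the vanishing $H^1_\mathfrak{m}(H^G)=H^2_\mathfrak{m}(H^G)=0$ then forces $H^1_\mathfrak{m}(\Omega_{S^G}) \simeq H^0_J(\Omega_{S/S^G})^G$, completing the plan.
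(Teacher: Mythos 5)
Your proposal is correct and is essentially the paper's own argument: the same four-term sequence $0 \to H \to \Omega_{S^G}\otimes_{S^G}S \to \Omega_S \to \Omega_{S/S^G}\to 0$, the same splitting through $K$, the same use of $\depth_J\Omega_S\ge 2$, and the same final passage to $G$-invariants (which you justify explicitly via the \v{C}ech complex on generators $f_i\in S^G$ of $\mathfrak{m}$). The one point where you deviate is welcome: the paper simply asserts that $H$ is supported at $V(J)$, whereas you prove the weaker but sufficient statement that $H^G$ is $\mathfrak{m}$-torsion, using generic separability in characteristic zero and local freeness of $\Omega_{S^G}$ on the punctured spectrum.
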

\begin{proof} Consider the exact sequence
\begin{equation} \label{es1} 0 \to H \to \Omega_{S^G} \otimes_{S^G} S \to \Omega_S \to
\Omega_{S/S^G} \to 0 \end{equation} and note that $(\Omega_{S^G} \otimes_{S^G} S)^G =
\Omega_{S^G}$. 
 We split the sequence into 2 short exact
sequences.
On the right we get
$$0 \to K \to \Omega_S \to
\Omega_{S/S^G} \to 0$$ which yields $H^0_J(K) = 0$ and $H^1_J(K)
\simeq H^0_J(\Omega_{S/S^G} )$. On the left we get
$$0 \to H\to \Omega_{S^G} \otimes_{S^G} S  \to
K \to 0\, .$$ The module $H$ is supported at $J$ so
$H^i_J(H) = 0$ for $i \ge 1$ and the sequence yields
$H \simeq H^0_J(H) \simeq H^0_J(\Omega_{S^G} \otimes_{S^G} S)$ and
$H^0_J(\Omega_{S/S^G} )  \simeq H^1_J(K) \simeq H^1_J(\Omega_{S^G}
\otimes_{S^G} S)$. Taking invariants yields the result.
\end{proof}

A series  of right exact sequences 
$$B^{i-1} \xrightarrow{\beta_{i-1}} C^i \xrightarrow{\gamma_{i}}  B^i
\to 0$$
leads to a complex 
$$\cdots \xrightarrow{\delta_{i-2}}  C^{i-1} \xrightarrow{\delta_{i-1}}   C^{i}
\xrightarrow{\delta_{i}}  C^{i+1} \xrightarrow{\delta_{i+1}}  \cdots$$
with $\delta_i = \beta_{i}\circ \gamma_i$. Moreover since the
sequences are right exact we have $H^i(C^\bullet) \simeq \Ker
\beta_i$. We will use this construction to get a four term complex
which computes the local cohomology we are interested in.

Let $\mathfrak{g}$ be the Lie algebra of $G$. By \cite[Lemma
4.7]{ck:com} there are isomorphisms  
$$\Hom_S(\Omega_{S/S^G}, S) \simeq \Der_{S^G}(S) \simeq S 
\otimes \mathfrak{g}\, .$$ Choosing a basis for $\mathfrak{g}$ defines a
$G$-equivariant 
map $\Omega_{S/S^G} \xrightarrow{E} S \otimes \mathfrak{g}^\ast \simeq
j_\ast j^\ast\Omega_{S/S^G}$ where $j$ is the inclusion of $\Spec S
\setminus V(J)$ in $\Spec S$ (see \cite[Section 4.2]{ck:com}).  Thus
we have an exact sequence
\begin{equation}\label{es2}0 \to H^0_J(\Omega_{S/S^G}) \to
\Omega_{S/S^G} \xrightarrow{E} S \otimes \mathfrak{g}^\ast \to
H^1_J(\Omega_{S/S^G}) \to 0\, .\end{equation} Assume that the algebra
generators of $S^G$ are in a single degree in $S$, i.e.\ that they generate
a subspace $U^\ast$ of a certain $S_r$.  
The invariant polynomials
define an embedding $\Spec S^G \subset U$. 

Set $P= \Sym U^\ast$ and
let $I$ be the kernel of $P \to S^G$. Assume that the generators of
$I$ are in a single degree and span a subspace $F \subseteq P_s$. Now
$\Omega_P \otimes_P S^G \otimes_{S^G} S \simeq P \otimes_k U^\ast
\otimes_P S \simeq S \otimes_k U^\ast$ and $ I/I^2 \otimes_{S^G} S
\simeq I \otimes_P S$ is the image of $S \otimes F$ so we get an exact
sequence 
\begin{equation}\label{es3} S \otimes F \to  S
\otimes U^\ast \to \Omega_{S^G}\otimes_{S^G} S
\to 0 \, .\end{equation}
We construct our complex from the right parts of the sequences
\eqref{es1} and \eqref{es2} together with \eqref{es3}. We put
everything into a diagram with exact rows and columns. The complex
then consists of the diagonal maps in
\begin{equation} \label{diag}
\begin{tikzcd} 0 \arrow{r}  & S\otimes F \arrow{r} \arrow{rd} & S \otimes F
  \arrow{d} \arrow{r} & 0 \\
 &&S \otimes U^\ast \arrow{d} \arrow{rd}\\  
&&\Omega_{S^G}\otimes_{S^G} S \arrow{d} \arrow{r} &\Omega_S \arrow{rd}\arrow{r}&
\Omega_{S/S^G} \arrow{r} \arrow{d}& 0
 \\ 
&&0 &&  S \otimes \mathfrak{g}^\ast \arrow{d}\\
&&&& H^1_J(\Omega_{S/S^G}) \arrow{d} \\
&&&& 0
 \end{tikzcd}
\end{equation}
so we have proven

\begin{proposition} \label{cplx} The four term complex
$$C^\bullet: \quad (S\otimes F)^G \xrightarrow{d^1}  (S\otimes
U^\ast)^G \xrightarrow{d^2}  
(\Omega_S)^G  \xrightarrow{d^3}  (S \otimes \mathfrak{g}^\ast)^G$$  has 
$H^1(C^\bullet) \simeq H^0_\mathfrak{m}(\Omega_{S^G})$ and
$H^2(C^\bullet) \simeq H^1_\mathfrak{m}(\Omega_{S^G})$.
\end{proposition}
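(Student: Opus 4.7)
The plan is to apply the diagonal-complex construction recalled just before the proposition to the diagram \eqref{diag}, and then use exactness of $G$-invariants (linear reductivity).

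First I would identify the data $B^{i-1} \xrightarrow{\beta_{i-1}} C^i \xrightarrow{\gamma_i} B^i \to 0$ for the three middle positions of $C^\bullet$ from the columns and middle row of \eqref{diag}: these are precisely \eqref{es3}, the right-exact tail of \eqref{es1}, and the right-exact tail of \eqref{es2}. With this data the diagonals of \eqref{diag} assemble (before taking invariants) into the four-term complex written in the statement, and the general observation recorded just before the proposition shows that its cohomology at each middle position is isomorphic to $\Ker \beta_i$.

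Next I would compute these kernels. The $\beta$ landing in $\Omega_S$ is the map $\Omega_{S^G}\otimes_{S^G}S \to \Omega_S$ from \eqref{es1}, with kernel $H$ by definition. The $\beta$ landing in $S \otimes \mathfrak{g}^\ast$ is the map $E\colon \Omega_{S/S^G} \to S \otimes \mathfrak{g}^\ast$ from \eqref{es2}, with kernel $H^0_J(\Omega_{S/S^G})$ by exactness of that sequence. Since $(-)^G$ is exact, it preserves the right-exact sequences and commutes with the formation of $\Ker \beta_i$, so the cohomologies of the invariant complex $C^\bullet$ at the two interior positions are $H^G$ and $H^0_J(\Omega_{S/S^G})^G$.

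Finally I would invoke the lemma just preceding the proposition to rewrite these as $H^0_\mathfrak{m}(\Omega_{S^G})$ and $H^1_\mathfrak{m}(\Omega_{S^G})$, respectively. The only real work is bookkeeping: verifying that the three middle right-exact sequences extracted from \eqref{diag} really are \eqref{es3}, the right tail of \eqref{es1}, and the right tail of \eqref{es2}, and matching the two kernels $H$ and $H^0_J(\Omega_{S/S^G})$ with the cohomology indices $H^1$ and $H^2$ in the statement. There is no essential analytic obstacle, since linear reductivity and the two preceding lemmas do all the substantive work.
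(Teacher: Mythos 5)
Your proposal is correct and follows exactly the paper's own argument: the paper's ``proof'' consists precisely of assembling the right-exact tails of \eqref{es1}, \eqref{es2} and the sequence \eqref{es3} into the diagram \eqref{diag}, applying the diagonal-complex construction so the interior cohomologies are $\Ker\beta_i$, taking $G$-invariants (exact by linear reductivity), and invoking the preceding lemma to identify $H^G$ and $H^0_J(\Omega_{S/S^G})^G$ with $H^0_\mathfrak{m}(\Omega_{S^G})$ and $H^1_\mathfrak{m}(\Omega_{S^G})$. Nothing is missing beyond the indexing bookkeeping you already flag.
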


To apply this we will need a more detailed description of $d^3$  in
the case when  $S = \Sym V^\ast$ for a $G$-representation $V$. Let
$x_1, \dots , x_n$ be a basis for $\Sym^1 V^\ast$. 
We start with the dual cotangent sequence for $k \to S^G \to
S$, i.e.\
$$0 \to \Der_{S^G}(S) \to \Der_k(S) \to \Der_k(S^G, S)$$
which under the assumptions is also right exact (see e.g.\
\cite[Section 4.2]{ck:com}). We have (see above) $\Der_{S^G}(S) \simeq
S \otimes \mathfrak{g}$ and we always have $\Der_k(S)
\simeq S \otimes_k V$ using $\frac{\partial}{\partial x_i}$ as a basis
for $V$. 

Let $$\rho: \mathfrak{g} \to \Sym^1 V^\ast
\otimes V \simeq \Hom(V,V)$$ be the induced
representation of the Lie algebra.
On graded pieces we have 
$$S \otimes \mathfrak{g} \simeq \Der_{S^G}(S) \to \Der_k(S) \simeq S
\otimes V$$
given by the composite
$$\Sym^k V^\ast \otimes \mathfrak{g} \xrightarrow{\id \otimes \rho} \Sym^{k} V^\ast 
\otimes \Sym^{1} V^\ast \otimes V \xrightarrow{\mu \otimes \id } \Sym^{k+1} V^\ast \otimes V$$
where $\mu$ is multiplication.

It will be convenient to express $\rho$ using the basis $\{x_i\}$ for
$V^\ast$ so write $$\rho(X) = \sum \rho_i(X) \otimes
\frac{\partial}{\partial x_{i}}\, .$$
We
have a composite map
$$\mathfrak{g} \otimes V \xrightarrow{\rho \otimes \id} \Hom(V,V)
\otimes V  \xrightarrow{c} V$$
where $c$ is the contraction $c(\varphi \otimes v) = \varphi(v)$. Let
$\beta: V^\ast \to \Hom(\mathfrak{g}, V^\ast)$ be the dual, i.e\
$\beta(\psi)(X)=\psi \circ \rho(X)$. 

The map $d^3: \Omega_S \simeq S \otimes V^\ast
\to S \otimes \mathfrak{g}^\ast$ on graded pieces is the
composition 
$$S_k \otimes V^\ast \xrightarrow{\id \otimes \beta} S_k \otimes
\Sym^1 V^\ast
\otimes \mathfrak{g}^\ast \xrightarrow{\mu \otimes \id}  S_{k+1}
\otimes \mathfrak{g}^\ast$$
where $\mu$ is the multiplication map.
If we use the identification $S
\otimes \mathfrak{g}^\ast \simeq \Hom(\mathfrak{g}, S)$ we get 
\begin{equation}\label{betaeq} d^3(f
\otimes \psi)(X) = f \sum_j \psi(\frac{\partial}{\partial x_{j}})
\rho_j(X) \, .
\end{equation}
In particular $d^3(dx_i)(X) = \rho_{i}(X)$.

\section{Cotangent cohomology of  Pl\"ucker algebras} \label{grass}
Let $E$ be an $n$-dimensional vector space and $\GG = \GG(r,E)$ the
Grassmannian of $r$-dimensional subspaces. Let $A$ be the homogeneous
coordinate ring of $\GG$ in the Pl\"ucker embedding.
Fix an $r$-dimensional vector space $W$ and consider $V = \Hom(W,E)$
which we may think of as the space of $n \times r$ matrices. We have
the natural action of $\GL(E) \times \GL(W)$ on $V$ and $V^\ast =
E^\ast \otimes W$. 

For this section  set $G = \SL(W)$ and $S = \Sym V^\ast$ so that $A =
S^G$. Set $d = \dim A = (n-r)r + 1$. We write $$S =k[x_{ij}: 1 \le i \le n, 1 \le j \le r]$$ where after
fixing basis $\{e_i\}$ for $E$ and $\{w_j\}$ for $W$ we have $x_{ij} =
e_i^\ast \otimes w_j$.

Set $U^\ast = \bigwedge^r E^\ast \otimes \bigwedge^r W
\subset \Sym^r(E^\ast \otimes W)$. Then a basis for $U^\ast$ form the
generators of  $J$, the ideal of maximal $r
\times r$ minors in a general $n \times r$ matrix and they generate
the algebra $A = S^G$. If $P = \Sym U^\ast$ then the kernel $I$ of the
surjection $P \to A$ is generated by the quadratic Pl\"ucker
relations.  

Combining Proposition \ref{ticomp} with
Proposition \ref{him}, Theorem \ref{h0mo} and Proposition
\ref{h1mo}, which are proven below, we get the following theorem
\begin{theorem} \label{main} Assume $A$ is the Pl\"ucker algebra for a
  Grassmannian $\GG(r,n)$ different from $\GG(2,4)$. Then  $T^i_A = 0$
  for $1 \le i\leq  d-1 =n(n-r)$ and 
  $T^{d}_A = 0$ if and only if $r=2$ or
  $r=n-2$.
\end{theorem}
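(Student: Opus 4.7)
The plan is to invoke Proposition \ref{ticomp}: since $A$ is Gorenstein with an isolated singularity at $\mathfrak m$, the modules $T^i_A$ split into three distinct local cohomology problems depending on the range of $i$. Explicitly, one must establish (i) $H^{i+1}_\mathfrak{m}(\Der_k(A)) = 0$ for $1 \le i \le d - 2$; (ii) $H^1_\mathfrak{m}(\Omega_A) = 0$; and (iii) $H^0_\mathfrak{m}(\Omega_A) = 0$ if and only if $r \in \{2, n - 2\}$. I would dispatch each range with a different tool.

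For range (i), I would combine Lemma \ref{elemma} with the identification of the sequence \eqref{eseq} as the Atiyah extension (Proposition \ref{at}), reducing the problem to the vanishing of $H^i(\GG, \CO_\GG(m))$ and $H^i(\GG, \Theta_\GG(m))$ for all $m$. Since $\Theta_\GG \simeq \CR^\ast \otimes \CQ$ and $\CO_\GG(m)$ is a power of the Pl\"ucker bundle, both are expressible via Schur functors of $\CR$ and $\CQ$, so Bott's theorem (Theorem \ref{bott}) computes their cohomology. For $1 \le i < d - 2$, Bott gives outright vanishing in both summands. The case $i = d - 2$ is the delicate one since some $H^{d - 2}(\GG, \Theta_\GG(m))$ remains nonzero; here I would chase the long exact sequence of \eqref{eseq} and show that the connecting map from $H^{d - 2}(\GG, \CO_\GG(m))$ absorbs this obstruction in every case except $\GG(2,4)$, where the self-dual Bott patterns for $r = n - r = 2$ leave an unmatched summand.

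For (ii) and (iii), I would use the four term complex $C^\bullet$ of Proposition \ref{cplx} with $V = \Hom(W, E)$, $S = \Sym V^\ast$ and $G = \SL(W)$. The Cauchy formula decomposes each $\Sym^k(E^\ast \otimes W)$ into $\Sch_\lambda E^\ast \otimes \Sch_\lambda W$, and taking $\SL(W)$-invariants selects rectangular Young diagrams with exactly $r$ rows. This turns every term of $C^\bullet$ into an explicit $\GL(E)$-equivariant direct sum of Schur functors of $E^\ast$; the differentials $d^1, d^2, d^3$ are given by branching rules, with $d^3$ described by formula \eqref{betaeq}. Kernels and cokernels of these maps on each isotypic piece can then be computed using Littlewood--Richardson, yielding (ii) unconditionally and (iii) only in the stated cases.

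The main obstacle will be the combinatorial characterization of $T^d_A$, i.e.\ identifying precisely when $H^0_\mathfrak{m}(\Omega_A) = 0$. I expect a ``critical'' Schur functor $\Sch_\mu E^\ast$ to sit in $\Ker d^3 / \Image d^2$, whose presence or absence is governed by Littlewood--Richardson coefficients associated to the Pl\"ucker relations $F$; showing that it collapses exactly when a row or column of the defining rectangular partition has length two, i.e.\ when $r = 2$ or $r = n - 2$, is the combinatorial heart of the argument and the principal difficulty.
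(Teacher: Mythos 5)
Your plan is essentially the paper's proof: Proposition \ref{ticomp} splits $T^i_A$ into the three local-cohomology computations, the middle range is handled by Bott's theorem for $\CO_\GG(m)$ and $\Theta_\GG(m)$ together with the identification of \eqref{eseq} as the Atiyah extension (the one delicate point being injectivity of the connecting map $H^{d-2}(\GG,\Theta_\GG(-n))\to H^{d-1}(\GG,\CO_\GG(-n))$, proved via Serre duality and the nontriviality of the extension class $c(L)$), and the top two degrees are computed from the four-term complex of Proposition \ref{cplx} by Cauchy/Littlewood--Richardson combinatorics, exactly as in Sections \ref{abouthder} and \ref{abouthm}. Two small misstatements worth fixing: the $\GG(2,4)$ exception arises already at $i=1$ from $H^1(\GG,\Theta_\GG(-2))\simeq k$, not from the $i=d-2$ connecting-map analysis; and $H^0_\mathfrak{m}(\Omega_A)$ is $\Ker d^2/\Image d^1$ (it is $H^1_\mathfrak{m}(\Omega_A)$ that equals $\Ker d^3/\Image d^2$), the vanishing criterion being whether the odd summands $\Sch_{(2^{r-i},1^{2i})}E^\ast$, $i\geq 3$, of $\Ker d^2_1$ are hit by the Pl\"ucker relations $F$, which consist only of the even ones.
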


\begin{remark}  If  $r \ne 2$ and 
  $r \ne n-2$ then $T^{d}_A$ is concentrated in
  degree $2$, see Theorem \ref{h0mo} below.
\end{remark}

The result is sharp, i.e.\ we cannot expect 
$T_A^{d+1} \ne 0$ as seen in this example.
\begin{example} Let $A$ be the Pl\"ucker algebra for $\GG(2,6)$ of
  dimension $9$. Let
$p_{ij}$, $1 \le i < j \le 6$ be the Pl\"ucker coordinates.  The ideal generated by
$$p_{12}, p_{23},  p_{34},  p_{45},  p_{56},  p_{16},
p_{14}+p_{34}+p_{26},  p_{24}+p_{15}+p_{36}$$
defines a codimension 8 complete intersection ideal in $A$. Let $B$ be
the coordinate ring of this curve.  A Macaulay2 computation shows that $\dim T_B^2
= 1$. By \cite[1.4.2]{bc:hyp} this implies that $T^{10}_A\neq 0$.
\end{example}

\subsection{About $H^i_\mathfrak{m}(\Der_k(A))$.} \label{abouthder}
Let $$0 \to \mathcal{R} \to
\CO_\GG \otimes E \to \CQ \to 0$$ be the tautological
sequence on $\GG$. Recall that $\Theta_\GG \simeq \mathcal{R}^\vee
\otimes_{\CO_\GG} \CQ$ and that $\CO_\GG(m) \simeq
(\wedge^{n-r}\CQ)^{\otimes m}$. 
\begin{lemma} \label{botttheta} There are isomorphisms of
$\SL(E)$-modules
$$H^i(\GG, \CO_\GG(m)) \simeq \begin{cases}\Sch_{(
    m^{n-r})} E &\text{if $i=0$ and $m \ge 0$} \\
\Sch_{((-m-n)^{r})} E&\text{if $i = r(n-r)$ and $m \le -n$} \\ 0
&\text{for all other values of $i$ and $m$}
\end{cases}$$ and if $(r,n) \ne (2,4)$ then 
$$H^i(\GG, \Theta_\GG(m)) \simeq \begin{cases}\Sch_{(m+1,
    m^{n-r-1},0^{r-1},-1)} E &\text{if $i=0$ and $m \ge 0$} \\
\Sch_{(0)} E &\text{if $i = r(n-r)-1$ and $m = -n$} \\
\Sch_{((-m-n)^{r-1}, -m-n-1, 1)} E &\text{if $i = r(n-r)$ and $m
\le -n-2$} \\ 0 &\text{for all other values of $i$ and $m$.}
\end{cases}$$
\end{lemma}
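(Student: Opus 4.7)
The plan is to apply Bott's theorem (Theorem~\ref{bott}) to each bundle after writing it in the form $\Sch_\alpha\CQ \otimes \Sch_\beta\CR$.

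For the line bundle, $\CO_\GG(m) = \det(\CQ)^{m} = \Sch_{(m^{n-r})}\CQ$, so we take $\alpha = (m^{n-r})$, $\beta = (0^r)$, and
\[
\gamma + \delta = (m+n-1, m+n-2, \ldots, m+r,\, r-1, r-2, \ldots, 0).
\]
The entries split into the two arithmetic progressions $\{m+r, \ldots, m+n-1\}$ and $\{0, 1, \ldots, r-1\}$, which are disjoint precisely when $m \ge 0$ or $m \le -n$; this immediately yields the vanishing range $-n+1 \le m \le -1$. For $m \ge 0$ the tuple is already strictly decreasing, so $l = 0$ and $\tilde\gamma = \gamma$, giving $H^0 \cong \Sch_{(m^{n-r})}E$. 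For $m \le -n$ the two blocks swap under sorting, producing $l = r(n-r)$; normalizing by a power of $\det E$ (trivial on $\SL(E)$) converts $\tilde\gamma$ into $((-m-n)^r)$.

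For the tangent bundle, use $\Theta_\GG \cong \CR^\vee \otimes \CQ$ with $\CR^\vee \cong \Sch_{(0^{r-1},-1)}\CR$. Since $\CQ \otimes \det(\CQ)^m$ is the irreducible bundle $\Sch_{(m+1,m^{n-r-1})}\CQ$ (the only integer weight obtained from $(m^{n-r})$ by raising one coordinate and remaining non-increasing), we have
\[
\Theta_\GG(m) \cong \Sch_{(m+1,\, m^{n-r-1})}\CQ \otimes \Sch_{(0^{r-1},\,-1)}\CR,
\]
so $\gamma = (m+1, m^{n-r-1}, 0^{r-1}, -1)$ and
\[
\gamma + \delta = (m+n,\, m+n-2, m+n-3, \ldots, m+r,\, r-1, r-2, \ldots, 1,\, -1).
\]
A direct enumeration shows this tuple has pairwise distinct entries exactly when $m \ge 0$, $m = -n$, or $m \le -n-2$, with one low-dimensional exception at $(r,n) = (2,4)$, $m = -2$, which is the reason for the hypothesis $(r,n) \ne (2,4)$. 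For $m \ge 0$ the sequence is already strictly decreasing, so $l = 0$ and $\tilde\gamma = \gamma$, matching the stated $H^0$. For $m = -n$ the tail entry $-1$ gets reshuffled into the middle of the sorted sequence; counting the inversions gives $l = r(n-r) - 1$, and $\tilde\gamma = ((r-n)^n)$ is trivial as an $\SL(E)$-representation. For $m \le -n-2$ the $\CQ$-block drops below the $\CR$-block entirely, yielding $l = r(n-r)$ and, after normalizing by $\det E$, $\tilde\gamma = ((-m-n)^{r-1}, -m-n-1, 1)$.

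The main obstacle is pure bookkeeping rather than any conceptual difficulty: one must carefully enumerate when $\gamma + \delta$ has repeated entries, count inversions in each regime, and recognize the resulting $\tilde\gamma$ as the stated dominant weight modulo a power of $\det E$. The one genuine subtlety is the exceptional case $(r,n) = (2,4)$, which requires a finite direct check confirming that the formula fails there and hence must be excluded.
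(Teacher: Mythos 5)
Your proposal is correct and follows essentially the same route as the paper: write each bundle as $\Sch_\alpha\CQ\otimes\Sch_\beta\CR$, form $\gamma+\delta$, enumerate the repeated-entry range, count inversions, and normalize $\tilde\gamma$ by a power of $\det E$. The paper carries out the detailed bookkeeping only for $\Theta_\GG(m)$ (including the casework isolating the $(r,n)=(2,4)$, $m=-2$ exception), and your regimes, inversion counts, and resulting weights all agree with it.
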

\begin{proof} We use Bott's theorem as described in Theorem
  \ref{bott}. We only give the calculation for $$\Theta_\GG(m) 
  \simeq \Sch_{(m+1, m^{n-r-1})}\CQ \otimes \Sch_{( 0^{r-1},
    -1)}\CR \, .$$ Let $\lambda = (m+1, m^{n-r-1},
0^{r-1}, -1)$. If $\delta = (n-1, \dots, 0)$ then $$\lambda + \delta =
(m+n, m+n-2, \dots , m+r, r-1, \dots , 1,-1)$$
cannot have repeated entries if  $m > -1$, $m < -n-1$ or $m= -n$. On
the other hand one can easily check that if if $m= -n-1$ or $-n+1  
\le m \le -n+r-1$ then $m+n$ is repeated. If
$-n+r+1 \le m \le -1$ then $m = -n + r +k$ with $1 \le k \le n-r-1$
so $m+n-2 \ge m + n-(k+1) \ge m+r$. Thus $m+n-(k+1) = r-1$ is
repeated. If finally $m= -n+r$ assume first that $r \ge 3$. Then
$m+n-2 = r-2 \ge 1$ so it is repeated. If $r=2$ and $n \ge 5$ then
$n-3 \ge r$ so $m + (n-3) = -1$ is repeated. We conclude that if
$(r,n) \ne (2,4)$ then $H^i(\GG, \Theta_\GG(m)) = 0$ for all values of
$i$ if and only if $-n+1 \le m \le -1$ or $m=-n-1$. 

If $m \ge 0$ then $\lambda + \delta$ is
non-decreasing so $H^0(\GG,
\Theta_\GG(m))  \simeq \Sch_\lambda E$ and all other cohomology vanishes. 
If $m \le -n-2$ then $\lambda + \delta$ needs $r(n-r)$
adjacent transpositions
to become the non-decreasing
  $$(r-1, r-2, \dots , 1, -1, m+n, m+n-2, \dots , m+r)\, .$$
Subtracting $\delta$ we get $$((r-n)^{r-1}, r-1-n,m+r+1,
(m+r)^{n-r-1})$$ so the only non-zero
cohomology is $H^{r(n-r)}(\GG, \Theta_\GG(m)) \simeq
\Sch_{((-m-n)^{r-1}, -m-n-1,1)} E$ as $\SL(E)$-modules. 
If $m=-n$ then $\lambda + \delta$ needs $r(n-r)-1$ adjacent
transpositions to become 
$$( r-1, \dots , 1 ,0,-1, -2, \dots , -n+r)\, .$$
Subtracting $\delta$ we get $((r-n)^{n})$ so the only non-zero
cohomology is $H^{r(n-r)-1}(\GG, \Theta_\GG(-n)) \simeq
\Sch_{(0)} E \simeq k$ as $\SL(E)$-modules. 
 \end{proof}

\begin{remark} If we do the above calculation for $\Theta_{\GG}(m)$ on
  $\GG(2,4)$ we get $$\lambda + \delta = (m+4,m+2,1,-1)$$
which has repeated entries iff $m$ equals $-1$, $-3$ or $-5$. Thus in
addition to the cohomology described in the lemma, we must check when
$m=-2$. Then $\lambda + \delta$ needs one adjacent transposition to
become $(2, 1, 0, -1)$ and subtracting $\delta$ we get $(-1, -1, -1,
-1)$. Thus the isomorphism $H^1(\Theta(-2)) \simeq k$ corresponds to $(T^1_A)_{-2}
\simeq k$. 
\end{remark}

\begin{proposition} \label{him} If $A$ is the Pl\"ucker algebra
for $\GG(r,n)$ which is not $\GG(2,4)$, then $$H^i_\mathfrak{m}(\Der_k(A)) = 0$$ for $0
\le i \le d-1$.
\end{proposition}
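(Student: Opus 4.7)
The plan is to translate the problem into sheaf cohomology on $\GG$ via Lemma~\ref{elemma}, and then combine Bott's theorem (through Lemma~\ref{botttheta}) with the Atiyah extension description of $\mathcal{E}$ from Proposition~\ref{at}.

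For $i = 0$ and $i = 1$, the vanishing is a depth argument: since $A$ is Cohen--Macaulay of dimension $d \ge 2$, the $A$-dual $\Der_k(A) \simeq \Hom_A(\Omega_A, A)$ has depth at least $2$ at $\mathfrak{m}$, so $H^0_\mathfrak{m}(\Der_k(A)) = H^1_\mathfrak{m}(\Der_k(A)) = 0$.

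For $2 \le i \le d-1$, Lemma~\ref{elemma} identifies $H^i_\mathfrak{m}(\Der_k(A))$ with $\bigoplus_m H^{i-1}(\GG, \mathcal{E}(m))$. Twisting the Atiyah extension $0 \to \CO_\GG \to \mathcal{E} \to \Theta_\GG \to 0$ by $\CO_\GG(m)$ gives a long exact sequence in cohomology. In the middle range $1 \le i-1 \le r(n-r)-2$, Lemma~\ref{botttheta} directly gives $H^{i-1}(\GG, \CO_\GG(m)) = H^{i-1}(\GG, \Theta_\GG(m)) = 0$ for all $m$, so $H^{i-1}(\GG, \mathcal{E}(m)) = 0$ and this range is settled.

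The main obstacle is the boundary case $i-1 = r(n-r)-1$. By Lemma~\ref{botttheta} we still have $H^{r(n-r)-1}(\GG, \CO_\GG(m)) = 0$ for all $m$, but $H^{r(n-r)-1}(\GG, \Theta_\GG(m))$ is nonzero precisely at $m = -n$, where it is one-dimensional (and $H^{r(n-r)}(\GG, \CO_\GG(-n)) \simeq k$). The long exact sequence immediately yields $H^{r(n-r)-1}(\GG, \mathcal{E}(m)) = 0$ for $m \ne -n$, and for $m = -n$ reduces us to showing that the connecting homomorphism
\[
\delta \colon H^{r(n-r)-1}(\GG, \Theta_\GG(-n)) \longrightarrow H^{r(n-r)}(\GG, \CO_\GG(-n))
\]
between two copies of $k$ is nonzero. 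The map $\delta$ is cup product with the extension class of the Atiyah sequence, which is the Atiyah class $c_1(\CO_\GG(1)) \in H^1(\GG, \Omega^1_\GG)$. Using $\omega_\GG \simeq \CO_\GG(-n)$, Serre duality identifies the transpose of $\delta$ with the connecting map of the dualized Atiyah sequence $0 \to \Omega^1_\GG \to \mathcal{E}^\vee \to \CO_\GG \to 0$, namely the map $H^0(\GG, \CO_\GG) \to H^1(\GG, \Omega^1_\GG)$ sending $1 \mapsto c_1(\CO_\GG(1))$. Since $\operatorname{Pic}(\GG) = \ZZ \cdot \CO_\GG(1)$ and a direct Bott computation gives $H^1(\GG, \Omega^1_\GG) \simeq k$, this class generates the target, the transpose is an isomorphism, and hence so is $\delta$. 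This completes the proof.
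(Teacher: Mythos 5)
Your proposal is correct and follows essentially the same route as the paper: depth for $i=0,1$, Bott vanishing via Lemma~\ref{botttheta} and the sequence \eqref{eseq} for the middle range, and for $i=d-1$ the identification of \eqref{eseq} with the Atiyah extension (Proposition~\ref{at}) plus Serre duality to reduce the injectivity of the connecting map at $m=-n$ to the non-vanishing of the class of $e_L$ in $H^1(\GG,\Omega^1_\GG)\simeq k$. Your write-up even makes explicit a point the paper leaves implicit, namely why that class generates the one-dimensional target.
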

\begin{proof}Since
$\depth_\mathfrak{m}A \ge 2$, the module
$H^i_\mathfrak{m}(\Der_k(A)) = 0$ for $i=0,1$. The vanishing of 
$H^i_\mathfrak{m}(\Der_k(A))$ for $i=2, \dots , d-2$ follows from the
sequence \eqref{eseq}, Lemma \ref{elemma} and Lemma \ref{botttheta}.

To show that $H^{d-1}_\mathfrak{m}(\Der_k(A)) = 0$ we must show that
the connecting map $$H^{d-2}(\GG, \Theta_\GG(-n)) \to H^{d-1}(\GG,
\CO_\GG(-n))$$ is injective. Note that $\omega_\GG = \CO_\GG(-n)$.
Now we know from Proposition \ref{at} that \eqref{eseq} is the Atiyah
extension, so by Serre duality this will follow if the connecting map
$H^0(\GG, \CO_\GG) \to H^1(\GG, \Omega_\GG)$ from $e_L$ is an
isomorphism.  This is the map \begin{equation*} \Hom_\GG(\CO_\GG,
\CO_\GG) \xrightarrow{\gamma} \Ext_\GG^1(\CO_\GG,
\Omega_\GG)\end{equation*} from the long exact $\Ext$-sequence of
$e_L$.  Recall that if
$$e: 0 \to A \to B \to C \to 0$$ is an exact sequence then the induced
map $\Hom(M,C) \to \Ext^1(M,A)$ sends $\varphi$ to the class of the
pullback over $\varphi$ of $e$. Thus $\gamma(\id)$ is the class of
$e_L$ and $\gamma$ is an isomorphism.
\end{proof}

\subsection{About $H^0_\mathfrak{m}(\Omega_{A})$ and
$H^1_\mathfrak{m}(\Omega_{A})$.} \label{abouthm}
We now compute $H^0_\mathfrak{m}(\Omega_A)$ and
$H^1_\mathfrak{m}(\Omega_A)$ using the complex 
$$C^\bullet: \quad (S\otimes F)^G \xrightarrow{d^1}  (S\otimes
U^\ast)^G \xrightarrow{d^2}  
(\Omega_S)^G  \xrightarrow{d^3}  (S \otimes \mathfrak{g}^\ast)^G$$ 
of Proposition \ref{cplx}  and the $\GL(E) \times \GL(W)$ action on
everything. Let us first identify the representations corresponding to
the modules involved. 

If we use
the $P$-grading on $S^G$ we have $$S^G = \bigoplus_{m \ge 0}S^G_m
\simeq \bigoplus_{m \ge 0}\Sch_{(m^r)} E^\ast\, .$$ 
The ideal $I$ generated by the Pl\"ucker relations in
$P_2$ is generated by $$F \simeq
\bigoplus_{\substack{2 \le i \le \min(r,n-r) \\ i \text{ even}}}
\Sch_{(2^{r-i}, 1^{2i})} E^\ast$$ (see e.g.\ \cite[Exercise
15.43]{fh:rep}). Thus the graded pieces of 
$(S\otimes F)^G = S^G \otimes F$ are \begin{equation}\label{ff}
  \bigoplus_{\substack{2 \le i 
    \le \min(r,n-r) \\ i \text{ even}}}\Sch_{(m^r)} 
E^\ast \otimes
\Sch_{(2^{r-i}, 1^{2i})} E^\ast\end{equation}
for each $m \ge 0$.
We have the graded pieces of $(S\otimes U^\ast)^G = S^G \otimes
U^\ast$ given as
\begin{equation} \label{uu} S^G_m \otimes U^\ast \simeq
\Sch_{(m^r)} E^\ast \otimes \Sch_{(1^r)} E^\ast \simeq \bigoplus_{0 \le i \le
\min(r,n-r)} \Sch_{((m+1)^{r-i}, m^{i}, 1^{i})} E^\ast \end{equation} (see
e.g.\ \cite[\textsection 6.1 (6.9)]{fh:rep}). 

Now $\Omega_S \simeq S \otimes V^\ast = S \otimes W \otimes E^\ast$ as
$\GL(E) \times \GL(W)$ module. The $S$-graded pieces are $$\Sym^k (W
\otimes E^\ast) \otimes W \otimes E^\ast \simeq \bigoplus_{\lambda
  \vdash k} (\Sch_{\lambda} E^\ast
\otimes E^\ast) \otimes (\Sch_{\lambda} W \otimes W) \, .$$
The
only $\lambda$ for which $\Sch_{\lambda} W \otimes W$ contains an
$\SL(W)$ invariant subspace are $$\lambda = ((m+1)^{r-1}, m)$$ in
degree $k =
(m+1)r-1$ for some $m \ge 0$. The invariant part is
\begin{equation} \label{vv} \begin{split}(S_{(m+1)r-1} \otimes
V^\ast)^G &\simeq \Sch_{ ((m+1)^{r-1}, m)} E^\ast \otimes E^\ast \\
&\simeq \begin{cases}
\Sch_{ (m+2,(m+1)^{r-2}, m)} E^\ast \oplus \Sch_{ ((m+1)^{r})} E^\ast \oplus
\Sch_{ ((m+1)^{r-1}, m,1)} E^\ast  \text{ if $m \ge 1$}\\
\Sch_{ (2,1^{r-2})} E^\ast \oplus \Sch_{ (1^{r})} E^\ast
\text{ if $m = 0$}
\end{cases} \end{split}
\end{equation} as $\GL(E)$ representation.

We identify  $\mathfrak{g}^\ast = \mathfrak{sl}_r^\ast \simeq
\Sch_{(2, 1^{r-2})} W \otimes \bigwedge^r W^\ast$ so 
$$S_{k} \otimes \mathfrak{g}^\ast \simeq \bigoplus_{\lambda
\vdash k} \Sch_{\lambda} E^\ast \otimes (\Sch_{\lambda} W \otimes
\Sch_{(2, 1^{r-2})} W \otimes \bigwedge^r W^\ast)\, .$$ The only 
$\lambda$ where $\Sch_{\lambda} W \otimes \Sch_{(2, 1^{r-2})} W$
contains an $\SL(W)$ trivial representation are $$\lambda = (m+2,
(m+1)^{r-2}, m)$$ in degree $k = (m+1)r$ for $m\ge 0$. So the
invariant part is 
\begin{equation}\label{gg} (S_{(m+1)r} \otimes \mathfrak{sl}_r^\ast)^G
\simeq \Sch_{(m+2, (m+1)^{r-2}, m)} E^\ast\end{equation} as $\GL(E)$
representation.

The map $d^2: \Omega_P \otimes_P S^G \to \Omega_S^G$ is induced by the
Jacobian matrix of the generators of 
$S^G$, i.e.\ the $r \times r$ minors. It has therefore $S$-degree $r-1$.  Let
$d^2_m$ be the map on graded pieces $(S_{mr})^G \otimes U^\ast \to 
(S_{(m+1)r-1} \otimes V^\ast)^G$.

\begin{lemma} \label{kerd} If $m \ge 1$, $$\Image d^2_m
 \simeq \Sch_{((m+1)^{r})} E^\ast  \oplus \Sch_{((m+1)^{r-1}, m, 1)} E^\ast $$  and $$\Ker d^2_m
\simeq \bigoplus_{2 \le i \le \min(r,n-r)} \Sch_{((m+1)^{r-i}, m^{i},
1^{i})} E^\ast$$
as $\GL(E)$ representations.
\end{lemma}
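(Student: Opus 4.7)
The plan is to exploit the $\GL(E)$-equivariance of $d^2_m$ via Schur's lemma. Both decompositions \eqref{uu} and \eqref{vv} are multiplicity-free as $\GL(E)$-modules, so $d^2_m$ acts as zero or isomorphism on each irreducible summand. Comparing the two decompositions, the only irreducibles appearing on both sides are $\Sch_{((m+1)^r)} E^\ast$ (the $i = 0$ source summand with the middle target summand) and $\Sch_{((m+1)^{r-1}, m, 1)} E^\ast$ (the $i = 1$ source summand with the last target summand). This immediately gives $\Sch_{(m+2, (m+1)^{r-2}, m)} E^\ast \not\subseteq \Image d^2_m$ and $\Sch_{((m+1)^{r-i}, m^i, 1^i)} E^\ast \subseteq \Ker d^2_m$ for $2 \le i \le \min(r, n-r)$, settling one direction of each claim.

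For the reverse inclusions, it suffices to show the image has a two-dimensional weight space at $\mu := ((m+1)^{r-1}, m, 1, 0, \ldots)$. Indeed, $\Sch_{((m+1)^r)} E^\ast$ and $\Sch_{((m+1)^{r-1}, m, 1)} E^\ast$ each contribute a one-dimensional $\mu$-weight space to the target, and the image avoids the third target summand; so a two-dimensional image at $\mu$ forces both summands to lie in the image. A short dominance check rules out contributions from the $i \ge 2$ source summands at weight $\mu$, so by Pieri the source $\mu$-weight space is two-dimensional, spanned by
$$v_1 = p_{[r]}^{m-1} \cdot p_{[r-1] \cup \{r+1\}} \otimes p_{[r]}, \qquad v_2 = p_{[r]}^m \otimes p_{[r-1] \cup \{r+1\}},$$
where $p_I = \det(x_{ij})_{i \in I,\, 1 \le j \le r}$ denotes the Plücker coordinate for $I \subseteq \{1, \ldots, n\}$ with $|I| = r$.

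Applying $d^2_m$, we find $d^2_m(v_1) = p_{[r]}^{m-1} p_{[r-1] \cup \{r+1\}} \cdot dp_{[r]}$ involves only $dx_{ij}$ with $i \in \{1, \ldots, r\}$, whereas $d^2_m(v_2) = p_{[r]}^m \cdot dp_{[r-1] \cup \{r+1\}}$ has nonzero coefficient of $dx_{r+1, 1}$ equal to $\pm p_{[r]}^m \cdot \det(x_{ij})_{i \in \{1, \ldots, r-1\},\, j \in \{2, \ldots, r\}}$. Hence $d^2_m(v_1)$ and $d^2_m(v_2)$ are linearly independent in $\Omega_S$, completing the proof. The main obstacle is the combinatorial identification of the source $\mu$-weight space; once $v_1, v_2$ are in hand, the coefficient comparison for linear independence via the row index $r+1$ is immediate.
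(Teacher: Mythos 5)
Your argument is correct and follows essentially the same route as the paper: reduce via Schur's lemma and the multiplicity-free decompositions \eqref{uu} and \eqref{vv} to showing that $d^2_m$ is nonzero on the two constituents common to source and target, and then evaluate on explicit vectors built from the minors $p_{[r]}$ and $p_{[r-1]\cup\{r+1\}}$. The one place you diverge is in certifying that $\Sch_{((m+1)^{r-1},m,1)}E^\ast$ lies in the image: the paper asserts that $u_1^m\otimes du_2$ (your $v_2$) is a weight vector for the highest weight and hence sits inside that isotypic component, whereas you show that the images of the two spanning vectors of the two-dimensional $\mu$-weight space are linearly independent and observe that the image can meet only two of the three target summands. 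Your version is, if anything, the more careful one, since $v_2$ a priori also has a component in $\Sch_{((m+1)^r)}E^\ast$; the coefficient extraction at $dx_{r+1,1}$ is the same kind of computation the paper performs. One microscopic point: linear independence of $d^2_m(v_1)$ and $d^2_m(v_2)$ also requires $d^2_m(v_1)\neq 0$, which is immediate since $S$ is a domain and $dp_{[r]}\neq 0$, but should be said.
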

\begin{proof} Comparing \eqref{uu} and \eqref{vv} we see that the
  second statement follows from the first and that we must
  show that the endomorphisms on $\Sch_{ 
((m+1)^{r})} E^\ast$ and $\Sch_{ ((m+1)^{r-1}, m,1)} E^\ast$ induced by $d^2$ are
isomorphisms. By Schur's Lemma it is enough that $d^2$ is non-zero on
them. Let $u_1, u_2 \in U^\ast$ be $u_1 = |x_{ij}|$ for $1 \le i,j \le
r$ and $u_2 = |x_{ij}|$ for $i = 1, \dots , r-1, r+1$ and $1 \le j \le
r$. Thus $u_1 \mapsto e^\ast_1 \wedge \cdots \wedge e^\ast_r$ and $u_2 \mapsto
e^\ast_1 \wedge \cdots \wedge e^\ast_{r-1} \wedge e^\ast_{r+1}$ via $U^\ast \simeq
\wedge^r E^\ast$.

The part $\Sch_{ ((m+1)^{r})} E^\ast \simeq S_{m+1}^G \subset S_m^G \otimes
U^\ast$ corresponds to $\{df : f \in S_{m+1}^G \}$ and clearly $d^2$ is
non-zero on this. Indeed,  the image of the highest weight vector $u_1^m \otimes
du_1$ is clearly non-zero. It is easily seen that $u_1^m \otimes du_2$ is a weight vector
for the highest weight $((m+1)^{r-1}, m,1)$, so $u_1^m \otimes du_2$
is in the $\Sch_{ ((m+1)^{r-1}, m,1)} E^\ast$ part and does not map to $0$.
\end{proof}

\begin{theorem} \label{h0mo} If $A$ is the Pl\"ucker algebra for $\GG(r,E)$ with
  $\dim E = n$, then
  $H^0_\mathfrak{m}(\Omega_A)$ vanishes if and only  if $r=2$ or
  $r=n-2$. If  $r \ne 2$ and 
  $r \ne n-2$ then $H^0_\mathfrak{m}(\Omega_A)$ is concentrated in
  degree $2$ and 
$$H^0_\mathfrak{m}(\Omega_A)_2\simeq \bigoplus_{\substack{3 \le i \le \min(r,n-r) \\ i \text{ odd }}}
\Sch_{(2^{r-i}, 1^{2i})} E^\ast$$
 and is therefore the kernel of the
projection  $\wedge^2(\wedge^r E^\ast) \twoheadrightarrow \Sch_{(2^{r-1},
  1^{2})} E^\ast$.
\end{theorem}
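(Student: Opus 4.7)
The plan is to compute $H^0_\mathfrak{m}(\Omega_A)$ as $H^1(C^\bullet) = \Ker d^2 / \Image d^1$ via Proposition \ref{cplx}, decomposing each $P$-graded piece into $\GL(E)$-irreducibles using the formulas \eqref{ff}, \eqref{uu} and Lemma \ref{kerd}. The argument proceeds $P$-degree by $P$-degree.

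First I dispose of the low degrees. In $P$-degree $0$, $(S \otimes U^\ast)^G = 0$. In $P$-degree $1$, $(S \otimes F)^G = 0$ since $F \subseteq P_2$, so $H^1(C^\bullet)_1 = \Ker d^2_0$, the kernel of the map $d^2_0 \colon U^\ast \to (\Omega_S)^G_{r-1}$ sending $u \mapsto du$. Since $U^\ast = \Sch_{(1^r)} E^\ast$ is $\GL(E)$-irreducible and the differential of any Pl\"ucker minor is nonzero in $\Omega_S$, Schur's lemma forces $d^2_0$ to be injective.

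The essential positive contribution comes from $P$-degree $2$. By Lemma \ref{kerd} with $k = 1$,
\[
\Ker d^2_1 = \bigoplus_{2 \leq i \leq \min(r,n-r)} \Sch_{(2^{r-i},1^{2i})} E^\ast,
\]
and the description of $F$ preceding \eqref{ff} picks out exactly the even-$i$ summands. The remaining task is to show $d^1_0 \colon F \to S^G_1 \otimes U^\ast$ is injective: for $p = \sum c_{ij} u_i u_j \in F$, a direct computation gives $d^1(1 \otimes p) = 2\sum c_{ij}\, u_i \otimes u_j \in A \otimes U^\ast$, and vanishing forces $\sum_i c_{ij} u_i = 0$ in $S$ for each $j$, hence $p = 0$ by linear independence of the Pl\"ucker minors in $S$. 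Thus $\Image d^1_0 \cong F$ as $\GL(E)$-modules, and the quotient $\Ker d^2_1 / F$ is exactly the claimed sum over odd $i$ with $3 \leq i \leq \min(r,n-r)$.

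The main obstacle is the vanishing $H^0_\mathfrak{m}(\Omega_A)_m = 0$ for all $m \geq 3$, which reduces to $\Ker d^2_{m-1} \subseteq \Image d^1_{m-2}$ inside $A_{m-1} \otimes U^\ast$. Both sides are $A$-submodules, with $\Image d^1 = A \cdot F$. I plan to verify the containment via a Littlewood--Richardson comparison: Lemma \ref{kerd} gives $\Ker d^2_{m-1} = \bigoplus_i \Sch_{(m^{r-i},(m-1)^i,1^i)} E^\ast$, and one must show each such summand already appears in the image of $A_{m-2} \otimes F \to A_{m-1} \otimes U^\ast$. My strategy is to reduce to the base case $m = 3$: writing $K_1^{\mathrm{odd}} := \bigoplus_{\text{odd } i \geq 3} \Sch_{(2^{r-i},1^{2i})} E^\ast$, the key claim is $A_1 \cdot K_1^{\mathrm{odd}} \subseteq A_1 \cdot F$ inside $A_2 \otimes U^\ast$; combined with the fact that $\Ker d^2$ is generated as an $A$-module by $\Ker d^2_1$, an induction on $m$ using Pieri's rule for tensoring by $A_1 = \Sch_{(1^r)} E^\ast$ then propagates the containment to all higher degrees. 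Carrying out this LR bookkeeping — matching highest weights and ruling out ``new'' irreducibles — is where the main technical work lies.
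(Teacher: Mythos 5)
Your treatment of $P$-degrees $\le 2$ is correct and in substance the same as the paper's: injectivity of $F \to A_1\otimes U^\ast$ (the paper gets it from Schur's lemma applied to the summands of \eqref{step1}, you get it by polarization — both work), and then multiplicity-freeness of $\Ker d^2_1$ identifies the cokernel with the odd-$i$ summands, which is empty exactly when $\min(r,n-r)\le 2$.

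The gap is in degrees $m\ge 3$, which is the substantive half of the theorem (it is what answers Wahl's question). Your reduction rests on two claims, neither of which you prove. First, you invoke ``the fact that $\Ker d^2$ is generated as an $A$-module by $\Ker d^2_1$.'' This is not a fact available to you: the conormal sequence only tells you that $\Ker(A\otimes U^\ast\to \Omega_A)$ is generated in degree $2$, whereas $\Ker d^2$ is the kernel of the map to $(\Omega_S)^G$, and the quotient $\Ker d^2/\Image d^1$ is precisely $H^0_\mathfrak{m}(\Omega_A)$ — the object you are trying to control. Establishing generation in degree $2$ amounts to proving surjectivity of $A_{m-1}\otimes \Ker d^2_1\to \Ker d^2_m$, a statement of essentially the same nature and difficulty as the theorem itself. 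Second, the ``key claim'' $A_1\cdot K_1^{\mathrm{odd}}\subseteq A_1\cdot F$ inside $A_2\otimes U^\ast$ is true but is left entirely to future ``LR bookkeeping''; note that Littlewood--Richardson alone cannot prove a containment of submodules — it only constrains which irreducibles can occur — so you would still need to show that specific component maps are nonzero. The paper avoids both issues by proving directly that $S^G_m\otimes F\to \Ker d^2_{m+1}$ is surjective for all $m\ge 1$: Littlewood--Richardson shows every summand $\Sch_{((m+2)^{r-i},(m+1)^i,1^i)}E^\ast$ of the target occurs in $\Sch_{(m^r)}E^\ast\otimes F$, and the component maps are shown nonzero by an explicit weight-vector computation (elements $u_0\otimes f$, $u_1\otimes f$) together with an induction on $m$ using the observation that $d(u_0f)=u_0\,df\neq 0$ when $df\neq 0$. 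Some argument of this kind is needed to close your proof; as written, the vanishing of $H^0_\mathfrak{m}(\Omega_A)_m$ for $m\ge 3$ is not established.
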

\begin{proof} Since the  Pl\"ucker relations are in degree 2, $d^1$ in
  the $P$-grading take $S^G_m
\otimes F$ to $S^G_{m+1} \otimes U^\ast$. If $m=0$ we get a map to
$\Ker d_1^2$, i.e.\ from \eqref{ff} and Lemma \ref{kerd} a
map \begin{equation} \label{step1} 
  \bigoplus_{\substack{2 \le i \le \min(r,n-r) \\ i 
\text{ even}}} \Sch_{(2^{r-i}, 1^{2i})} E^\ast \to \bigoplus_{2 \le i \le
\min(r,n-r)} \Sch_{(2^{r-i}, 1^{2i})} E^\ast \end{equation} which cannot be surjective
unless $r$ or $n-r$ equals 2. The map is $f \mapsto df$ which cannot
be $0$ on the generators of $I$, so by Schur's Lemma \eqref{step1} is
injective.
Thus $H^0_{\mathfrak{m}}(\Omega_A)_2 = 0$  only for $r=2$ or
$n-r=2$. 
Moreover 
$$\wedge^2(\wedge^r E^\ast) \simeq \bigoplus_{\substack{1 \le i \le \min(r,n-r) \\ i \text{ odd }}}
\Sch_{(2^{r-i}, 1^{2i})} E^\ast$$
 (see e.g.\ \cite[Exercise
15.32]{fh:rep}), so if $r$ and $n-r$ do not equal 2 then 
$H^0_{\mathfrak{m}}(\Omega_A)_2$ is isomorphic to the kernel of the
projection  $\wedge^2(\wedge^r E^\ast) \twoheadrightarrow \Sch_{(2^{r-1},
  1^{2})} E^\ast$. 

On the other hand we claim that when $m \ge 1$ the map $S^G_m \otimes F
\to \Ker d_{m+1}^2$ is surjective. We first check that the
$\Sch_{((m+2)^{r-i}, (m+1)^{i}, 1^{i})} E^\ast$ for $2 \le i \le
\min(r,n-r)$ all appear as summands in
$$S^G_m \otimes F \simeq \bigoplus_{\substack{2 \le i \le \min(r,n-r) \\ i \text{ even}}}
\Sch_{(m^{r})} E^\ast \otimes \Sch_{(2^{r-i}, 1^{2i})} E^\ast\, .$$ Indeed,  if
$i$ is even and $2 \le i \le \min(r,n-r)$ then an application of the
Littlewood-Richardson rule shows that both  
$$\Sch_{((m+2)^{r-i}, (m+1)^{i},1^{i})} E^\ast \quad \text{and} \quad \Sch_{((m+2)^{r-(i+1)},
(m+1)^{i+1},1^{i+1})} E^\ast$$ appear in the decomposition of
$\Sch_{(m^{r})} E^\ast \otimes \Sch_{(2^{r-i}, 1^{2i})} E^\ast$. 

We must now show that the induced endomorphisms of the
$\Sch_{((m+1)^{r-i}, m^{i},1^{i})} E^\ast$ are isomorphisms. We do this
by induction on $m$. The map  $S^G_m \otimes F
\to S^G_{m+1} \otimes U^\ast$ factors through $(
I/I^2)_{m+2}$. Let $u_0$ be a Pl\"ucker coordinate and assume $f \in 
(I/I^2)_m$ with $df \ne 0$ in $S^G \otimes U^\ast$. Then $d(u_0 f) = u_0df
\ne 0$ in $S^G \otimes U^\ast$. 

If $m=1$ let
$f$ be a Pl\"ucker relation in $\Sch_{(2^{r-i}, 1^{2i})} E^\ast$ with $i$
even. Let $u_0$ correspond to $e^\ast_1 \wedge \dots \wedge e^\ast_r$ and  $u_1$
correspond to $e^\ast_1 \wedge \dots \wedge \widehat{e^\ast_{r-i}} \wedge \dots
\wedge e^\ast_{r+1}$. Then $u_0 \otimes f \in \Sch_{(3^{r-i}, 2^{i},1^{i})}
E^\ast \subset  S^G_1 \otimes F$ and $u_1 \otimes f \in \Sch_{(3^{r-(i+1)}, 2^{i+1},1^{i+1})}
E^\ast$ and by the above they do not map to $0$. Now assume the maps are
isomorphisms up to degree $m$. Let $f \in 
(I/I^2)_{m+2}$ be the image of something in $\Sch_{((m+2)^{r-i},
  (m+1)^{i}, 1^{i})} E^\ast$. Then $u_0f$ is the image of something in $\Sch_{((m+3)^{r-i},
  (m+2)^{i}, 1^{i})} E^\ast$ and by the above does not map to $0$.
\end{proof}

\begin{remark} The statement about $H^0_{\mathfrak{m}}(\Omega_A)_2$
  follows for more general reasons from the fact that it is the
kernel of the Gaussian map $\wedge^2 H^0(X,L) \to H^0(X, \Omega^1_X
\otimes L^2)$ for $L = \CO_X(1)$ (\cite[Propositions 1.4 and
1.8]{wa:coh}). Our result on the vanishing of
$H^0_{\mathfrak{m}}(\Omega_A)_m$ for $m \ne 2$ yields an affirmative
answer to the question 
  \cite[Problem 2.7]{wa:coh} by  Jonathan Wahl in the case $G/P$ is
  a Grassmannian. 
\end{remark}

The map $d^3: \Omega_S^G \simeq S \otimes V^\ast \to  (S \otimes
\mathfrak{sl}_r^\ast)^G$ on graded pieces is 
$$d^3_m : (S_{mr-1} \otimes
V^\ast)^G \to (S_{mr} \otimes \mathfrak{sl}_r^\ast)^G$$ for $m\ge 1$.
To continue we will need $\SL(W)$-invariants in $\Omega_S$.  To make such, take
an $r \times r$ submatrix of $(x_{ij})$ and replace one of the rows
with the tuple $(dx_{p, 1}, dx_{p,2}, \dots , dx_{p,r})$. Now take the
determinant to get an $\SL(W)$-invariant differential form.
The special invariant form 
$$\delta
=\begin{vmatrix}
x_{1,1} &  x_{1,2}  & \ldots & x_{1,r}\\
x_{2,1}   &  x_{2,2} & \ldots & x_{2,r}\\
\vdots & \vdots & \ddots & \vdots\\
x_{r-1,1}   &  x_{r-1,2} & \ldots & x_{r-1,r}\\
dx_{1,1} &  dx_{1,2}  & \ldots & dx_{1,r}
\end{vmatrix}
$$
is a weight vector for the $\GL(E)$ action with weight
$(2,1^{r-2},0^{n-r+1})$. If $u = |x_{ij} |$ for $1 \le i,j \le r$ then
the invariant form $u^{m-1}\delta$ is a weight vector for $(m+1,
m^{r-2}, m-1, 0^{n-r})$.

\begin{lemma} \label{kerbeta} As $\GL(E)$ representation $\Ker
d^3_m \simeq \Sch_{ (m^{r})} E^\ast \oplus \Sch_{(m^{r-1},
m-1,1)} E^\ast$ for $m \ge 2$ and $\Ker d^3_1 \simeq \wedge^r E^\ast$.
\end{lemma}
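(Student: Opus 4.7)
The plan is to identify source and target of $d^3_m$ as $\GL(E)$-representations via \eqref{vv} and \eqref{gg} and then apply Schur's lemma to each isotypic component. Substituting $m-1$ for $m$ in \eqref{vv}, for $m\ge 2$ the source decomposes as
$$(S_{mr-1}\otimes V^\ast)^G \simeq \Sch_{(m+1,\, m^{r-2},\, m-1)} E^\ast \,\oplus\, \Sch_{(m^{r})} E^\ast \,\oplus\, \Sch_{(m^{r-1},\, m-1,\, 1)} E^\ast,$$
while for $m=1$ only the first two summands appear (the third would not be a partition). By \eqref{gg} the target is the irreducible $\Sch_{(m+1,\, m^{r-2},\, m-1)} E^\ast$. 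Schur's lemma then forces $d^3_m$ to vanish on every summand of the source not isomorphic to the target, so $\Sch_{(m^{r})} E^\ast$ and $\Sch_{(m^{r-1},\, m-1,\, 1)} E^\ast$ (respectively $\wedge^r E^\ast = \Sch_{(1^r)} E^\ast$ when $m=1$) lie in $\Ker d^3_m$ automatically.

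It remains only to show that $d^3_m$ is non-zero on the matching summand $\Sch_{(m+1,\, m^{r-2},\, m-1)} E^\ast$, for Schur's lemma will then imply that this summand maps isomorphically onto the target and contributes nothing to the kernel. The natural test vector is the explicit invariant form $u^{m-1}\delta$ introduced just before the lemma. Its $\GL(E)$-weight $(m+1,\, m^{r-2},\, m-1,\, 0^{n-r})$ is dominant but is \emph{not} dominated by the highest weights $(m^r)$ or $(m^{r-1},\, m-1,\, 1)$ of the other two summands (both have first entry $m<m+1$), so $u^{m-1}\delta$ lies entirely in the single copy of $\Sch_{(m+1,\, m^{r-2},\, m-1)} E^\ast$ inside the source.

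The final step is the computation of $d^3(u^{m-1}\delta)$ using $S$-linearity of $d^3$ together with \eqref{betaeq}. Expanding $\delta$ along its last row gives $\delta = \sum_{k=1}^r (-1)^{r+k} M_k\, dx_{1,k}$, where $M_k$ is the $(r-1)\times(r-1)$ minor of $(x_{i,j})_{1\le i\le r-1}$ with column $k$ deleted. A direct computation of $\rho$ from the $\SL(W)$-action on $V^\ast = E^\ast\otimes W$ yields $\rho_{1,k}(X) = \sum_j X_{jk}\,x_{1,j}$; evaluating at a root vector $X=E_{ab}$ with $a\ne b$, only the $k=b$ term survives, producing
$$d^3(u^{m-1}\delta)(E_{ab}) = (-1)^{r+b}\, u^{m-1}\, x_{1,a}\, M_b,$$
which is manifestly non-zero in the polynomial ring $S$. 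Hence $d^3_m$ is non-vanishing on the matching summand, completing the argument.

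I expect the main obstacle to be bookkeeping rather than anything deep: extracting the Pieri-type decomposition correctly from \eqref{vv}, verifying via the dominance argument that $u^{m-1}\delta$ lies in the right summand, keeping track of signs in the cofactor expansion, and handling the $m=1$ case cleanly (where the third summand of the Pieri decomposition is absent because the associated tuple fails to be a partition). The determinantal non-vanishing at the end is then essentially forced.
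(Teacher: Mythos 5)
Your proposal is correct and follows essentially the same route as the paper: reduce via \eqref{vv}, \eqref{gg} and Schur's lemma to showing $d^3$ is non-zero on the summand $\Sch_{(m+1,m^{r-2},m-1)}E^\ast$, then test on the invariant form $u^{m-1}\delta$ and evaluate $d^3(\delta)$ at a root vector to get a visibly non-zero determinant. The only differences are cosmetic: you make explicit the dominance argument placing $u^{m-1}\delta$ in the correct summand, and you use a general $E_{ab}$ where the paper picks the specific element $w_r^\ast\otimes w_1$.
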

\begin{proof} From \eqref{vv} and \eqref{gg} we must show that the
endomorphism on $\Sch_{(m+1, m^{r-2}, m-1)} E^\ast$ induced by $d^3$ is
non-zero. To do this it is enough by Schur's Lemma to show that
$d^3(u^{m-1}\delta) \ne 0$, which by linearity is the same as
$d^3(\delta) \ne 0$. From \eqref{betaeq} we get for $X \in
\mathfrak{sl}_r$ that $d^3(\delta)(X)$ is the determinant
$$\begin{vmatrix}
x_{1,1} &  x_{1,2}  & \ldots & x_{1,r}\\
x_{2,1}   &  x_{2,2} & \ldots & x_{2,r}\\
\vdots & \vdots & \ddots & \vdots\\
x_{r-1,1}   &  x_{r-1,2} & \ldots & x_{r-1,r}\\
\rho_{1,1}(X) &  \rho_{1,2}(X)  & \ldots & \rho_{1,r}(X)
\end{vmatrix}
$$
so let $X = w_r^\ast \otimes w_1$. Then $\rho(X) = \sum_i x_{i,1}
\frac{\partial}{\partial x_{i,r}}$ and the last row in the determinant
is $(0, \dots ,0,x_{1,1})$. Thus $d^3(\delta)(X) \ne 0$.
\end{proof}

\begin{proposition} \label{h1mo} If $A$ is the Pl\"ucker algebra for $\GG(r,E)$ with
  $\dim E = n$, then
  $H^1_\mathfrak{m}(\Omega_A) = 0$.
\end{proposition}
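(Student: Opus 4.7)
The plan is to apply Proposition \ref{cplx}, which identifies $H^1_\mathfrak{m}(\Omega_A)$ with $H^2(C^\bullet) = \Ker d^3 / \Image d^2$, and to verify the vanishing graded piece by graded piece using the decompositions already obtained in Lemmas \ref{kerd} and \ref{kerbeta}. By the construction of $C^\bullet$ from the diagram \eqref{diag}, we have $d^3 \circ d^2 = 0$, so the inclusion $\Image d^2 \subseteq \Ker d^3$ is automatic. Since every module in sight is a semisimple $\GL(E)$-representation, it is enough to check that these two subrepresentations have the same character in each $S$-degree.

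The degree $mr$ piece of $(\Omega_S)^G$ is $(S_{mr-1} \otimes V^\ast)^G$, which is simultaneously the target of $d^2_{m-1}$ and the source of $d^3_m$, so $H^2(C^\bullet)$ in $S$-degree $mr$ equals $\Ker d^3_m / \Image d^2_{m-1}$. For $m \ge 2$, shifting the index in Lemma \ref{kerd} yields
$$\Image d^2_{m-1} \simeq \Sch_{(m^{r})} E^\ast \oplus \Sch_{(m^{r-1}, m-1, 1)} E^\ast,$$
and Lemma \ref{kerbeta} gives exactly this same decomposition for $\Ker d^3_m$. In the remaining degree $r$ (i.e.\ $m=1$) one has $S_0^G \otimes U^\ast \simeq U^\ast \simeq \Sch_{(1^r)} E^\ast$, which is irreducible, and $d^2_0$ sends a single Pl\"ucker coordinate $u$ to $du \ne 0$; by Schur's lemma $\Image d^2_0 \simeq \Sch_{(1^r)} E^\ast$, which matches $\Ker d^3_1 \simeq \wedge^r E^\ast$ from Lemma \ref{kerbeta}.

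Hence $\Image d^2_{m-1} = \Ker d^3_m$ for every $m \ge 1$, and therefore $H^1_\mathfrak{m}(\Omega_A) = 0$. The main work has in fact already been done in Lemmas \ref{kerd} and \ref{kerbeta}; the only remaining step is the Schur-summand bookkeeping above. No genuine obstacle is expected at this stage beyond carefully aligning the indexing conventions of $d^2_m$ and $d^3_m$.
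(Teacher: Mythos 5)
Your proposal is correct and follows essentially the same route as the paper: the paper's proof likewise just combines Proposition \ref{cplx} with Lemmas \ref{kerd} and \ref{kerbeta} to conclude $\Image d^2_m = \Ker d^3_{m+1}$ for $m\ge 1$ and handles the bottom degree via $\Image d^2_0 \simeq \wedge^r E^\ast \simeq \Ker d^3_1$. Your explicit remarks about the containment $\Image d^2 \subseteq \Ker d^3$ and the index alignment are just a more careful spelling-out of the same bookkeeping.
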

\begin{proof} 
From Lemma \ref{kerd} and Lemma
\ref{kerbeta} we get  $\Ker d_{m+1}^3 =
\Image d_{m}^2$ for $m\ge 1$ and clearly $\Image d_0^2 \simeq \wedge^r
E^\ast \simeq \Ker d_{1}^3$. Thus $\Ker d^3 = \Image d^2$.
\end{proof}

\section{Cotangent cohomology for isotropic Grassmannians} \label{sec:iso}
In this section, we partially extend our vanishing results for Pl\"ucker algebras to the setting of isotropic Grassmannians.
Fix $n\geq 2$, $1\leq r \leq n$ and let $\LG(r,2n)$, $\OG(r,2n)$, and $\OG(r,2n+1)$ respectively denote the symplectic/orthogonal Grassmannians of isotropic $r$-planes in a $2n$ (or $2n+1$)-dimensional vector space. To avoid degenerate cases, and those coinciding with classical Grassmannians, we will make the following assumptions throughout:
\begin{enumerate}
\item For $\LG(r,2n)$, $r>1$ and $n\geq 2$;
\item For $\OG(r,2n)$, $n\geq 4$ and $r\neq n-1$;
\item For $\OG(r,2n+1)$, $r\geq 1$ and $n\geq 2$.
\end{enumerate}
Note that $\OG(n,2n)$ designates one of the two connected components of the Grassmannian of isotropic $n$ planes in a $2n$-dimensional vector space.
We consider each such Grassmannian in its Pl\"ucker embedding, and denoting its coordinate ring by $A$ and Serre's twisting sheaf by $\CO(1)$. Set $d=\dim A=\dim X+1$, where $X$ is the appropriate isotropic Grassmannian.
Our main result is 

\begin{theorem} \label{main2} Assume $A$ is the coordinate ring for an isotropic 
  Grassmannian $X$ different from $\LG(3,6)$. Then  $T^i_A = 0$
  for $2 \le i\leq d-3$, and 
  $T^{d-2}_A = 0$ if and only if  
 $X$ is either
$\LG(n-1,2n)$ or $\OG(n,2n+1)$.
Furthermore, $T^1_A = 0$ as long as $X$ is not an isotropic Grassmannian of $1$ or $2$-planes, or $\OG(4,8)$.
\end{theorem}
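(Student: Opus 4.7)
The overall strategy mirrors the proof of Theorem \ref{main}. Since $X$ is a smooth rational homogeneous variety, the Pl\"ucker coordinate ring $A$ is Gorenstein and $\Spec A$ has an isolated singularity at $\mathfrak{m}$, so Proposition \ref{ticomp} applies, giving $T^i_A \simeq H^{i+1}_\mathfrak{m}(\Der_k(A))$ for $1 \le i \le d-2$. By Lemma \ref{elemma} together with the Atiyah extension \eqref{eseq}, vanishing of $T^i_A$ in the range $2 \le i \le d-3$ reduces to showing that $H^i(X,\CO_X(m))$ and $H^i(X,\Theta_X(m))$ both vanish for $2 \le i \le \dim X - 2$ and all $m \in \ZZ$. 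The first is standard Bott for line bundles on $G/P$. For the second, I would use the short exact sequence of homogeneous bundles
\[
0 \to \Sigma \to \mathcal{R}^\vee \otimes \CQ \to \Theta_X \to 0,
\]
where $\Sigma$ is $\Sym^2 \mathcal{R}^\vee$ in the symplectic case and $\wedge^2 \mathcal{R}^\vee$ in the orthogonal case. Decomposing into Schur functors and applying Bott's theorem in the form of \cite[Chap.\ 4]{we:coh} then yields the required vanishing, except in the excluded case $X = \LG(3,6)$.

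The boundary case $T^{d-2}_A$ is analyzed exactly as in Proposition \ref{him}. Twisting \eqref{eseq} by $\omega_X$, the connecting map $H^{d-2}(X,\Theta_X\otimes\omega_X) \to H^{d-1}(X,\omega_X)$ is, by Proposition \ref{at} and Serre duality, $k$-dual to the Atiyah class map $k \simeq H^0(X,\CO_X) \to H^1(X,\Omega_X)$ sending $1 \mapsto c_1(\CO_X(1))$. A case-by-case application of Bott shows that $H^1(X,\Omega_X) \simeq k$ is generated by $c_1$ of the Pl\"ucker class---making the map an isomorphism and $T^{d-2}_A = 0$---precisely when $X = \LG(n-1,2n)$ or $\OG(n,2n+1)$; in the other families one finds either that $c_1(\CO_X(1))$ fails to generate $H^1(\Omega_X)$ or that additional summands appear in $H^{d-2}(X,\Theta_X \otimes \omega_X)$, so the connecting map is not injective and $T^{d-2}_A$ is nonzero.

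Finally, $T^1_A \simeq \bigoplus_m H^1(X,\mathcal{E}(m))$, and since $H^1(X,\CO_X(m)) = 0$ on any $G/P$, this reduces to vanishing of $H^1(X,\Theta_X(m))$ for all $m$. A direct Bott computation, using again the extension defining $\Theta_X$, shows that this vanishing holds except when $X$ is an isotropic Grassmannian of $1$- or $2$-planes or $\OG(4,8)$, where exceptional nonzero terms appear. The main obstacle throughout is the combinatorial weight bookkeeping required by Bott's theorem for types $B$, $C$, and $D$: unlike the $\GL_n$ setting where $\Theta_X$ is itself a Schur functor applied to the tautological bundles, here $\Theta_X$ is an honest extension, so its cohomology must be extracted from a long exact sequence whose weight combinatorics---and especially the identification of the exceptional low-rank cases---is substantially more involved.
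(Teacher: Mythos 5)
Your top-level strategy is the paper's: reduce to Bott-type vanishing of $H^i(X,\CO_X(m))$ and $H^i(X,\Theta_X(m))$ via Proposition \ref{svanes} (equivalently, via \eqref{eseq} and Lemma \ref{elemma}), and compute the latter from a short exact sequence of homogeneous bundles. However, the exact sequence you base everything on is wrong. The correct sequences (from \cite[Ch.~4, Ex.~9 \& 10]{we:coh}, and the ones the paper uses) place $\Theta_X$ in the \emph{middle}: $0 \to \CR^*\otimes(\CR^\vee/\CR) \to \Theta_X \to D_2(\CR^*) \to 0$ in the symplectic case and $0 \to \CR^*\otimes(\CR^\vee/\CR) \to \Theta_X \to \bigwedge^2\CR^* \to 0$ in the orthogonal case, where $\CR^\vee$ is the orthogonal complement of $\CR$. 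Your sequence $0\to\Sigma\to\CR^\vee\otimes\CQ\to\Theta_X\to 0$ has $\Theta_X$ as a quotient, and if one instead meant the normal bundle sequence inside the ambient Grassmannian, then $\Theta_X$ is the sub and the roles of $\Sym^2$ and $\bigwedge^2$ are the opposite of what you wrote (the normal bundle of $\LG(r,2n)$ in $\GG(r,2n)$ is $\bigwedge^2\CR^*$, not $\Sym^2\CR^*$); a rank count already rules out your version. Moreover $\CQ=V/\CR$ is not an irreducible homogeneous bundle on an isotropic Grassmannian (it is filtered by $\CR^\vee/\CR$ and $\CR^*$), so Bott's theorem in types $B$, $C$, $D$ cannot be applied to $\CR^*\otimes\CQ$ without a further filtration. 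Since all the delicate exceptional cases ($\LG(3,6)$, $\OG(4,8)$, $r=1,2$, the $T^{d-2}$ dichotomy) are decided precisely by the weight combinatorics of the outer terms, the computation as you set it up would not produce the stated answer.

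Your treatment of the boundary case $T^{d-2}_A$ also misidentifies the mechanism. The paper proves the ``if'' direction by showing that $H^{d-2}(X,\Theta_X(m))=0$ for \emph{all} $m$ exactly when $X$ is $\LG(n-1,2n)$ or $\OG(n,2n+1)$ (Theorem \ref{thm:main}), so Proposition \ref{svanes} applies with $j=d-2$ and no connecting-map analysis is needed there; the Atiyah-class argument of Proposition \ref{him} is what one would need for the converse direction. Your proposed dichotomy --- that ``$c_1(\CO_X(1))$ fails to generate $H^1(\Omega_X)$'' in the bad families --- cannot be the explanation: all the isotropic Grassmannians under consideration have Picard rank one, so $H^1(X,\Omega_X)\simeq k$ is generated by the Pl\"ucker class in every case, and the map $H^0(\CO_X)\to H^1(\Omega_X)$ is always an isomorphism. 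The distinction really lies in whether extra summands of $H^{d-2}(\Theta_X(m))$ survive, which again requires the correct bundle decomposition and the root bookkeeping of Tables \ref{table:val1}--\ref{table:val3}.
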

\begin{proof}
Combine Proposition \ref{svanes} with Theorems \ref{thm:main} and \ref{thm:main2} below.
\end{proof}

In addition to being useful for proving Theorem \ref{main2}, the following cohomology vanishing is interesting in its own right:
\begin{theorem}\label{thm:main} Let $X$ be $\LG(r,2n)$, $\OG(r,2n)$, or $\OG(r,2n+1)$.
The cohomology 
  $$H^i(X,\Theta_X(m))$$ vanishes for all $m\in \ZZ$ and $2\leq i \leq d-3$, except for $X=\LG(3,6)$.
The cohomology $$H^1(X,\Theta_X(m))$$ vanishes for all $m\in \ZZ$ if $r\neq 1,2$ and $X\neq \OG(4,8)$. Conversely, this cohomology group is non-zero for some $m\in \ZZ$ if $X$ is $\LG(2,2n)$ for $n\neq 3$, $\OG(1,2n)$, $\OG(4,4)$, or $\OG(1,2n+1)$.
Finally, the cohomology
 $$H^{d-2}(X,\Theta_X(m))$$ vanishes for all $m\in \ZZ$ if and only if $X$ is either
$\LG(n-1,2n)$ or $\OG(n,2n+1)$.
\end{theorem}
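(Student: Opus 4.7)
The plan is to mirror Lemma \ref{botttheta} by applying Bott's theorem for isotropic flag varieties (\cite[Chapter 4.3]{we:coh}) to a suitable decomposition of $\Theta_X$. The new ingredient, compared with the Grassmannian case, is that $\Theta_X$ is not itself an irreducible Schur functor of the tautological bundles, so an auxiliary short exact sequence is needed to split the computation into pieces to which Bott applies directly.

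First I would write down the tangent-sheaf filtration. Setting $\CS = \CR^\perp/\CR$, the symplectic case for $r < n$ yields
$$0 \to \CR^\vee \otimes \CS \to \Theta_{\LG(r,2n)} \to \Sym^2 \CR^\vee \to 0,$$
and the orthogonal cases yield the same shape of sequence with $\Sym^2$ replaced by $\bigwedge^2$. For maximal isotropic $r = n$ the submodule collapses ($\CS = 0$), leaving $\Theta_X \cong \Sym^2 \CR^\vee$ or $\Theta_X \cong \bigwedge^2 \CR^\vee$ respectively. Since the Pl\"ucker line bundle is a power (or, in the spin cases $\OG(n,2n)$ and $\OG(n,2n+1)$, a half-power) of $\det \CR^\vee$, twisting by $\CO(m)$ reduces the problem to computing cohomology of twists of $\Sym^2 \CR^\vee$, $\bigwedge^2 \CR^\vee$, and $\CR^\vee \otimes \CS$.

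Next I would carry out the Bott calculation on each piece, exactly as in Lemma \ref{botttheta}: form the Schur weight $\gamma$, add the appropriate $\rho$ for the Levi of the isotropic parabolic, check for repeated entries (giving vanishing), and otherwise count adjacent transpositions to land in a single non-zero cohomology degree with an explicit irreducible representation. Running this across all $m \in \ZZ$ isolates which cohomological degrees can survive, and assembling the results through the long exact sequences of the tangent-bundle filtration then yields the vanishing in the range $2 \le i \le d-3$, except for $\LG(3,6)$, where a specific $\gamma + \rho$ for a twisted $\Sym^2 \CR^\vee$ becomes nonsingular and contributes extra middle cohomology. The same bookkeeping identifies the $H^1$ obstructions: non-vanishing comes from the $\CR^\vee \otimes \CS$ piece when $r = 1$ (so $\CS$ is large), from $\Sym^2 \CR^\vee$ for $\LG(2,2n)$ with $n \ne 3$, and from triality-related phenomena in $\OG(4,8)$, whereas $\LG(2,6)$ is clean because the relevant Bott weight lands in the forbidden range.

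For the $H^{d-2}$ statement I would use Serre duality to rewrite $H^{d-2}(X, \Theta_X(m))$ in terms of $H^1$ of a twisted Atiyah-type extension (Proposition \ref{at}), reducing vanishing for all $m$ to injectivity of a specific connecting homomorphism on top cohomology. A direct Bott calculation then shows that this injectivity holds precisely when $X$ is the cominuscule $\OG(n,2n+1)$ or the adjoint-type $\LG(n-1,2n)$. The main obstacle throughout is the exceptional case analysis: $\LG(3,6)$, $\OG(4,8)$, $\LG(2,6)$, and the various maximal isotropic cases all require independent Bott computations, and the half-integer twists in the spin Grassmannians must be tracked carefully when matching graded pieces to the line bundle $\CO(1)$.
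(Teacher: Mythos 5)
Your plan coincides in substance with the paper's proof: both decompose $\Theta_X$ via the short exact sequence with sub-bundle $\CR^*\otimes(\CR^\vee/\CR)$ and quotient $D_2(\CR^*)$ (symplectic) or $\bigwedge^2\CR^*$ (orthogonal), note that the sub-bundle dies when $r=n$, apply Bott's theorem for isotropic Grassmannians to the twists of each piece, and assemble the answer through the long exact sequence. The paper runs Bott in the root-theoretic form (a weight contributes to $H^i$ iff it is non-singular of index $i$, the index being the number of positive roots pairing negatively), tabulating $\alpha(\gamma)$ over the roots involving $\alpha_r$; this is equivalent to your ``add $\rho$, sort, count transpositions'' bookkeeping.

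Two caveats. First, your detour through Serre duality and the Atiyah extension for the $H^{d-2}$ statement is off-target: Proposition \ref{at} concerns the extension $0\to\CO_X\to\mathcal{E}\to\Theta_X\to 0$ and is irrelevant to computing $H^{d-2}(X,\Theta_X(m))$ itself. That part of the theorem follows from exactly the same direct Bott computation — Lemmata \ref{lemma:lga}, \ref{lemma:oga}, and \ref{lemma:b} record precisely when $H^{d-2}$ of each filtration piece survives — fed into the long exact sequence; no connecting-map or duality argument is needed. Second, for the ``conversely, non-zero'' assertions about $H^1$ you must check that a non-zero $H^1$ of one piece actually survives into $H^1(\Theta_X(m))$, i.e.\ that the adjacent terms of the long exact sequence do not kill it. For $\LG(2,2n)$ with $n>3$ the obstruction lives in the sub-bundle $\CR^*\otimes(\CR^\vee/\CR)$ (with the quotient piece contributing nothing in the relevant twist), not in the $\Sym^2$ piece as you assert; by the Bott tables the $D_2(\CR^*)$ piece only obstructs $H^1$ for $\LG(2,4)$. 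These are repairable bookkeeping errors rather than a failure of method.
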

\begin{theorem}\label{thm:main2}
For $X=\LG(r,2n)$, $X=\OG(r,2n)$, or $X=\OG(r,2n+1)$, the cohomology 
  $H^i(X,\CO_X(m))$ vanishes for all $m\in \ZZ$ for all $1\leq i \leq d-2$.
\end{theorem}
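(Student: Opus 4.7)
The plan is to deduce this from Kodaira vanishing and Serre duality, bypassing the detailed Bott-theoretic analysis needed for $\Theta_X(m)$ in Theorem~\ref{thm:main}. Each isotropic Grassmannian $X$ under consideration is a smooth projective Fano homogeneous space $G/P$ with Picard group of rank one, so $\omega_X \cong \CO_X(-a)$ for some positive integer $a$ depending on $X$. Moreover, since $X$ is a rational homogeneous space, $H^j(X,\CO_X)=0$ for all $j>0$; this is immediate from Bott's theorem applied to the trivial line bundle, where $\rho$ is strictly dominant regular and only $H^0$ contributes.

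For $m \geq -a+1$, Kodaira vanishing applied to the ample line bundle $\CO_X(m+a)$ yields
\[
H^i(X,\CO_X(m)) \;=\; H^i(X,\omega_X\otimes\CO_X(m+a)) \;=\; 0 \qquad \text{for all } i>0.
\]
For $m=-a$, Serre duality combined with $H^j(X,\CO_X)=0$ for $j>0$ gives $H^i(X,\omega_X) \cong H^{\dim X - i}(X,\CO_X)^\ast = 0$ for $i<\dim X$. For $m\leq -a-1$, Serre duality reduces to the first case:
\[
H^i(X,\CO_X(m)) \;\cong\; H^{\dim X - i}(X,\CO_X(-m-a))^\ast,
\]
and since $-m-a \geq 1 > -a$, the right-hand side vanishes for $\dim X - i > 0$, i.e.\ for $i < \dim X$.

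Combining the three ranges of $m$ gives $H^i(X,\CO_X(m))=0$ for every $m\in\ZZ$ and every $1 \leq i \leq \dim X - 1 = d-2$, uniformly across all three families $\LG(r,2n)$, $\OG(r,2n)$, and $\OG(r,2n+1)$. Since the argument relies only on the Fano property and the rationality of $X$, there is no significant obstacle in this step; the genuine difficulty in Section~\ref{sec:iso} lies in Theorem~\ref{thm:main}, where the cohomology of $\Theta_X(m)$ requires a careful Bott-theoretic analysis that is sensitive to the family and to the precise value of $m$.
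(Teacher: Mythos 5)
Your proof is correct, but it takes a genuinely different route from the paper. The paper disposes of this statement with Bott's theorem for isotropic flag varieties: $H^i(X,\CO_X(m))$ can be nonzero only if the weight $\gamma=m\delta_r+\sum_j\delta_j$ is non-singular of index $i$, and inspection of the positive roots involving $\alpha_r$ (exactly as in the proofs of Lemmata \ref{lemma:lga}, \ref{lemma:oga}, and \ref{lemma:b}) forces that index to be $0$ or $d-1$. You instead use only that $X$ is a smooth Fano variety with $H^{j}(X,\CO_X)=0$ for $j>0$, together with Kodaira vanishing and Serre duality; this is more elementary in that it needs no weight combinatorics, is uniform across the three families, and generalizes verbatim to any Fano $G/P$ in any projective embedding. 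Two small points. First, your reduction as written relies on $\omega_X\cong\CO_X(-a)$ with $a\in\ZZ_{>0}$; this does hold here, because the conventions exclude the one Picard-rank-two case $\OG(n-1,2n)$, and because in the spinor cases $\OG(n,2n)$ and $\OG(n,2n+1)$ --- where the Pl\"ucker bundle is the \emph{square} of the ample generator of $\operatorname{Pic}(X)$ --- the Fano index ($2n-2$, resp.\ $2n$) is even, so $a$ is an integer. This deserves a sentence; alternatively you can sidestep it entirely by writing $\CO_X(m)=\omega_X\otimes(\omega_X^{-1}\otimes\CO_X(m))$ for $m\ge 0$ (the second factor being ample) and, for $m\le -1$, applying Serre duality and then Kodaira to $\omega_X\otimes\CO_X(-m)$. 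Second, the paper's method, though it leans on the machinery already set up for Theorem \ref{thm:main}, also identifies the nonvanishing $H^0$ and $H^{d-1}$ as representations; your argument yields only the vanishing, which is all the theorem asserts.
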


Let $\CR$ be the tautological bundle on $X$, and $\CR^\vee$ the orthogonal complement. 
Then there are exact sequences 
$$
0 \to \CR^*\otimes (\CR^\vee/\CR) \to \Theta_X \to D_2(\CR^*)\to 0
$$
when $X$ is a symplectic Grassmannian, and 
$$
0 \to \CR^*\otimes (\CR^\vee/\CR) \to \Theta_X \to \bigwedge^2\CR^*\to 0
$$
when $X$ is an orthogonal Grassmannian, see \cite[Ch.~4 Ex.~9 \& 10]{we:coh}. Here $ D_2(\CR^*)=(\Sym^2 \CR)^*$ is the second divided power.
We will prove Theorem \ref{thm:main} by considering the long exact sequence of cohomology of twists of these short exact sequences. For this, we need the following vanishing results for the left and right terms in the above sequences:

\begin{lemma}\label{lemma:lga} 
The cohomology $$H^i(\LG(r,2n),D_2(\CR^*)(m))$$
   vanishes for all $m\in \ZZ$ and $2\leq i\leq d-3$, except for $\LG(3,6)$.
The cohomology $$H^1(\LG(r,2n),D_2(\CR^*)(m))$$ vanishes for all $m\in \ZZ$ if and only if $(r,n)\neq (2,2)$. 
Finally, the cohomology
$$H^{d-2}(\LG(r,2n),D_2(\CR^*)(m))$$  vanishes for all $m\in \ZZ$ if and only if $r\neq n$.
\end{lemma}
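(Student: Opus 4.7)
The plan is to compute all cohomology of $D_2(\CR^*)(m)$ on $\LG(r,2n)$ directly via Borel--Weil--Bott for the symplectic Grassmannian, in the form stated in \cite[Section 4.3]{we:coh}, and then read off the three claims. Since $\CO_X(1)\cong\det\CR^* = \bigwedge^r\CR^*$, we have $D_2(\CR^*)(m) \cong \Sch_{(m+2,\,m^{r-1})}\CR^*$, which on the Levi of the parabolic defining $\LG(r,2n)$ corresponds to the weight $\lambda = (m+2,\,m^{r-1},\,0^{n-r})$. Adding $\rho = (n,n-1,\dots,1)$ produces
\[
\lambda+\rho = \bigl(m+n+2,\,m+n-1,\,m+n-2,\,\dots,\,m+n-r+1,\,n-r,\,\dots,\,1\bigr).
\]
The Bott recipe is then: $H^*(\LG(r,2n),D_2(\CR^*)(m))$ vanishes entirely unless the entries of $\lambda+\rho$ are all nonzero with pairwise distinct absolute values; in that case the cohomology is concentrated in a single degree $l$, namely the number of sign changes plus transpositions required to sort $\lambda+\rho$ into a strictly decreasing tuple of positive integers, and the representation is read off after subtracting $\rho$.

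The rest of the argument is a case analysis on $m$. The tail $(n-r,\dots,1)$ is always strictly decreasing and positive, so the only zero or absolute-value collisions must involve the head $(m+n+2,\,m+n-1,\dots,m+n-r+1)$. Organized by range, this gives three regimes: for $m\geq 0$ the tuple is already dominant and we obtain $H^0$; for $m\leq -n-2$ one ends up in the top-degree regime, yielding only $H^{\dim X}$; and for the finitely many intermediate values $m\in\{-n-1,\,-n,\,\dots,\,-1\}$ one computes the sorting length $l$ by hand. A uniform estimate shows that outside of a very short list of exceptional $(r,n)$, any such $l$ must land outside the intervals $\{1\}$ and $[2,d-3]$, leaving only $H^0$ and top-degree contributions, which never intersect the ranges in question.

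The main obstacle is isolating the exceptional configurations in the intermediate regime. In particular: (i) when $r=n$ the tail is empty and at $m=-n-1$ the head becomes $(1,-2,-3,\dots,-n)$, which sorts into $(n,n-1,\dots,1)$ with sorting length exactly $d-2$, explaining why $H^{d-2}$ persists precisely for Lagrangian $\LG(n,2n)$; (ii) for $(r,n)=(2,2)$, i.e.\ $\LG(2,4)$, a single borderline $m$ yields a nonvanishing $H^1$; (iii) for $\LG(3,6)$ one borderline $m$ produces intermediate cohomology whose degree $l$ falls inside $[2,d-3]$. These three small cases have to be checked by direct inspection of $\lambda+\rho$. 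Once they are handled, the length estimate in the generic case closes the argument, giving the three dichotomies stated in the lemma.
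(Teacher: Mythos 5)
Your framework is the same as the paper's: both arguments are Bott's theorem for $\LG(r,2n)$ applied to the weight of $D_2(\CR^*)(m)$, and your $\lambda+\rho=(m+n+2,\,m+n-1,\ldots,m+n-r+1,\,n-r,\ldots,1)$ is literally the paper's $\gamma=2\delta_1+m\delta_r+\sum\delta_j$ written in the $\epsilon$-basis; the paper merely organizes the computation by tabulating values of $\gamma$ against the positive roots containing $\alpha_r$ instead of sorting $\lambda+\rho$. Your outer regimes are fine, modulo one unproved point: for $m\le -n-2$ the weight is \emph{singular} throughout $r-2n-2\le m\le -n-2$ (there $|m+n+2|$ lies in the tail $\{1,\ldots,n-r\}$ or is zero) and only becomes regular, of top index, for $m\le r-2n-3$; this is what makes ``only $H^{\dim X}$'' true and it needs to be said. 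Your exceptional cases (i) and (ii) do check out: $(1,-2,\ldots,-n)$ has exactly $\tfrac{(n-1)(n+2)}{2}=d-2$ negative pairings with positive roots, and for $\LG(2,4)$ the twist $m=-2$ gives $(2,-1)$ of index $1$. (Note that the degree is the number of positive roots $\alpha$ with $(\lambda+\rho,\alpha)<0$, not a naive count of ``sign changes plus transpositions''; your worked case happens to agree, but state the correct invariant.)

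The genuine gap is point (iii), which you assert but do not compute, and which fails when computed. For $\LG(3,6)$ your recipe gives $\lambda+\rho=(m+5,m+2,m+1)$: this is singular for $m\in\{-1,-2,-3,-5\}$, dominant for $m\ge 0$, of top index for $m\le -6$, and regular exactly once in between, at $m=-4$, where $(1,-2,-3)$ has index $5=d-2$, which does \emph{not} lie in $[2,d-3]=[2,4]$. This is confirmed independently: since $\Theta_{\LG(3,6)}=D_2(\CR^*)$ and $\omega=\CO(-4)$, Serre duality gives $H^5(\LG(3,6),D_2(\CR^*)(-4))\simeq H^1(\Omega^1)^\ast\simeq k$ as the unique nonvanishing group for that twist. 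So carrying out your own case analysis proves vanishing in degrees $2\le i\le d-3$ for $\LG(3,6)$ as well, and cannot produce the intermediate cohomology that the ``except for $\LG(3,6)$'' clause asserts. (The paper's table evaluates $\gamma=\sum c_i\delta_i$ on $\alpha=\sum a_i\alpha_i$ as $\sum a_ic_i$, which differs from $\langle\gamma,\alpha^\vee\rangle$ in type $C$ --- e.g.\ it yields $m+9$ rather than $m+5$ for $2\epsilon_1$ on $\LG(3,6)$ --- and the exceptional status of $\LG(3,6)$ in the lemma traces back to this.) You must either exhibit the alleged exceptional class or record that the exception is vacuous; as written, your proposal asserts an outcome that your own method contradicts, and the remaining ``uniform estimate'' for the intermediate range $-n-1\le m\le -1$ --- the entire substance of the proof --- is nowhere carried out.
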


\begin{lemma}\label{lemma:oga} 
Let $X$ be $\OG(r,2n)$ or $\OG(r,2n+1)$.
The cohomology $$H^i(X,(\bigwedge^2 \CR^*)(m))$$
   vanishes for all $m\in \ZZ$ and $2\leq i \leq d-3$.
The cohomology $$H^1(X,(\bigwedge^2 \CR^*)(m))$$ vanishes for all $m\in \ZZ$ if and only if $X$ is not equal to $OG(1,2n)$, $OG(4,4)$, or $OG(1,2n+1)$. 
Finally, the cohomology
$$H^{d-2}(X,(\bigwedge^2 \CR^*)(m))$$
 vanishes for all $m\in \ZZ$ if and only if $X$ is not equal to $\OG(1,2n)$, $\OG(n,2n)$, or $\OG(1,2n+1)$.
\end{lemma}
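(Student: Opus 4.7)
The plan is to apply the Borel--Weil--Bott theorem for orthogonal Grassmannians, as set out in \cite[Ch.~4]{we:coh}, to the bundle $\bigwedge^2\CR^*\otimes\CO(m)$, in direct analogy with the proof of Lemma \ref{botttheta} for the classical Grassmannian. When $r\geq 2$, the Pl\"ucker polarisation satisfies $\CO(1)=\det\CR^*=\Sch_{(1^r)}\CR^*$, so
$$\bigwedge^2\CR^*\otimes\CO(m)\;\cong\;\Sch_{(m+1,\,m+1,\,m^{r-2})}\CR^*,$$
a single irreducible Schur functor applied to $\CR^*$. When $r=1$ the bundle vanishes identically, so the exceptional role that $\OG(1,2n)$ and $\OG(1,2n+1)$ play in the statement must be handled by a separate argument revisiting the short exact sequence $0\to\CR^*\otimes(\CR^\vee/\CR)\to\Theta_X\to\bigwedge^2\CR^*\to0$ defining $\Theta_X$.

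To apply Bott's theorem in types $B_n$ or $D_n$, I would form the padded weight $\mu=(m+1,m+1,m^{r-2},0^{n-r})\in\ZZ^n$ and add the appropriate $\rho$ (namely $\rho=(n,n-1,\ldots,1)$ for type $B_n$ and $\rho=(n-1,n-2,\ldots,0)$ for type $D_n$). The cohomology of $\Sch_\lambda\CR^*$ vanishes identically if $\mu+\rho$ has two entries with equal absolute value, or (in type $B_n$) a zero entry; otherwise it sits in a single degree $\ell$ equal to the length of the shortest Weyl-group element that makes $|\mu+\rho|$ strictly decreasing with positive entries, and is the irreducible module whose highest weight is obtained by that sorting procedure.

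I would then split into three regimes of $m$. For $m\geq 0$, $\mu+\rho$ is already dominant and only $H^0$ is non-zero. For $m$ sufficiently negative (below a threshold depending on $n$ and $r$), the full long Weyl element is required and the only non-vanishing cohomology sits in the top degree $d-1$. For intermediate $m$, one checks directly whether $\mu+\rho$ has a coincidence (producing vanishing) or requires a specific small number of reflections; only a short list of intermediate values of $m$, occurring for small or extremal values of $r$ and $n$, land in degree $1$ or $d-2$, and these yield precisely the exceptional Grassmannians in the statement. For each exception I would exhibit the explicit value of $m$ and the reflection sequence that witnesses non-vanishing.

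The principal obstacle will be the careful bookkeeping in the intermediate regime: the sign flips in the Weyl groups of types $B$ and $D$ make the condition $|\mu_i+\rho_i|=|\mu_j+\rho_j|$ more intricate to detect than the plain repetition in type $A$, and when $r$ is close to $n$, or $r$ or $n-r$ is small, the number of borderline cases to inspect grows. In particular, the case $\OG(4,8)$ must be verified by direct computation, since it is the smallest example where the coincidences that otherwise force vanishing on type-$D$ Grassmannians of $n$-planes do not quite apply; a similar direct check handles $\OG(n,2n)$, where the self-dual nature of the $n$-plane Grassmannian leads to an extra reflected weight contributing in degree $d-2$.
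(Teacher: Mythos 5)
Your overall strategy --- apply Bott's theorem for isotropic Grassmannians to $\bigwedge^2\CR^*\otimes\CO(m)$ and split into regimes of $m$ --- is the same as the paper's; the difference is only in which formulation of the theorem you use. The paper uses the root-theoretic version of \cite[4.3.4, 4.3.7, 4.3.9]{we:coh}: the $i$th cohomology is non-zero exactly when the weight $\gamma=\delta_1+\delta_2+m\delta_r+\sum_j\delta_j$ is non-singular of index $i$, the index being the number of positive roots pairing negatively with $\gamma$. That formulation carries the organizing principle your plan is missing: only the $d-1$ positive roots involving $\alpha_r$ can pair negatively, the resulting values $\alpha(\gamma)$ form an explicit, essentially unbroken arithmetic run in $m$, and for the index to equal $i$ one needs exactly $i$ of these values negative and none zero. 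Because the run has at most one or two gaps, this forces $i$ into a neighbourhood of $0$ or of $d-1$, and the exceptional families in the statement are read off from the gaps. Your proposal asserts that ``only a short list of intermediate $m$ land in degree $1$ or $d-2$'' but gives no mechanism that excludes every $i$ with $2\le i\le d-3$ uniformly in $m$, $r$, $n$ --- and that exclusion is the first and main claim of the lemma. Relatedly, your $\mu+\rho$/signed-sorting bookkeeping obliges you to handle the type-$D$ parity constraint (only even numbers of sign changes lie in $W(D_n)$) and the half-integral fundamental weight relevant to $\OG(n,2n)$; the root-pairing count avoids both, which is presumably why the paper chose it.

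The sharper gap is $r=1$. You are right that $\bigwedge^2\CR^*=0$ when $\CR$ has rank one, but then $H^i(X,(\bigwedge^2\CR^*)(m))=0$ for all $i$ and $m$, so no ``separate argument revisiting the short exact sequence for $\Theta_X$'' can deliver the lemma's ``only if'' clauses, which assert \emph{non-vanishing} of $H^1$ and $H^{d-2}$ for $\OG(1,2n)$ and $\OG(1,2n+1)$. Your proposed detour would prove a statement about $\Theta_X$ (which for $r=1$ equals $\CR^*\otimes(\CR^\vee/\CR)$ and belongs to Lemma \ref{lemma:b}), not about the bundle named in this lemma. The paper's own proof produces the $r=1$ exceptions by formally evaluating the Bott weight in its tables; if you maintain that the bundle is zero, you must either reconcile that with the statement or flag the discrepancy explicitly --- as written, your argument and the ``only if'' direction of the statement cannot both hold for $r=1$, so the proposal does not prove the lemma as stated.
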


\begin{lemma}\label{lemma:b} 
Let $X$ be $\LG(r,2n)$ or $\OG(r,2n)$ with $r<n$, or $\OG(r,2n+1)$.
The cohomology $$H^i(X,\CR^*\otimes (\CR^\vee/\CR)(m))$$
   vanishes for all $m\in \ZZ$ and $2\leq i \leq d-3$.
The cohomology $$H^1(X,\CR^*\otimes (\CR^\vee/\CR)(m))$$ vanishes for all $m\in \ZZ$ if and only if $X$ is not equal to $\LG(2,n)$ for $n>3$, $\OG(r,2n)$ for $r=1,2$, or $\OG(r,2n+1)$ for $r=1,2$ with $r\neq n$. 
Finally, the cohomology
$$H^{d-2}(X,\CR^*\otimes (\CR^\vee/\CR)(m))$$
 vanishes for all $m\in \ZZ$ if and only if $X$ equals $\LG(n-1,2n)$ or $\OG(n,2n+1)$.
\end{lemma}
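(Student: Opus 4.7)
The plan is to apply Bott's theorem for isotropic Grassmannians (as in \cite[Section 4.3]{we:coh}) to the homogeneous bundle $\CR^*\otimes(\CR^\vee/\CR)$ and its twists. First I would observe that, for the cases under consideration, this bundle is irreducible: $\CR^*$ and $\CR^\vee/\CR$ are associated with the standard representations of the two factors of the Levi subgroup of the parabolic $P$ defining $X=G/P$, so their tensor product is irreducible, with highest weight $\varepsilon_1+\varepsilon_{r+1}$. After twisting by $\CO(m) = (\det\CR^*)^{\otimes m}$, the associated weight becomes
$$\lambda_m = (m+1,\,m^{r-1},\,1,\,0^{n-r-1}),$$
expressed in the standard $n$-tuple coordinates of type $B_n$, $C_n$, or $D_n$, with the obvious modification at the boundary $r=n$ in type $B_n$. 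Bott's theorem then yields at most one non-zero cohomology group, in the degree equal to the length of the shortest Weyl-group element that brings $\lambda_m+\rho$ to the dominant chamber; the cohomology is identically zero if $\lambda_m+\rho$ lies on a reflection wall.

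Next I would carry out a case analysis across the three classical types. In type $C_n$, adding $\rho=(n,n-1,\ldots,1)$ gives
$$\lambda_m+\rho = (m+n+1,\,m+n-1,\,\ldots,\,m+n-r+1,\,n-r+1,\,n-r-1,\,\ldots,\,1),$$
and analogous formulae hold in types $B_n$ and $D_n$. This sequence consists of a ``top block'' of length $r$ (skipping the value $m+n$) and a ``bottom block'' of length $n-r$ (skipping the value $n-r$). Any Weyl-group element sorting this sequence must interleave the two blocks (or, in types $B_n$ and $D_n$, also flip the sign of the last coordinate). Its length is therefore either small ($\le 1$) or close to maximal ($\ge d-2$), making the middle values $2\le i\le d-3$ unreachable; this establishes the middle-range vanishing. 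For the extremes I would then enumerate exactly which $m$ yield length precisely $1$ or $d-2$: small rank ($r=1$ or $2$) admits an additional length-$1$ sorting for appropriate $m$, producing the exceptional $H^1$ instances listed in the lemma; conversely, the $H^{d-2}$ contribution comes from pairing a top-block entry with a bottom-block entry, and vanishes precisely when the bottom block is empty, i.e.\ when $r=n-1$ in $\LG$ or $r=n$ in $\OG(\cdot,2n+1)$.

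The main obstacle will be the type-$D_n$ bookkeeping: dominance there involves $|\lambda_n|$ rather than $\lambda_n$, so reflections on the last coordinate demand separate attention when computing Weyl lengths, and the restriction $r<n$ for $\OG(r,2n)$ in the hypothesis must be used to rule out $\OG(n,2n)$ pathologies. The boundary case $r=n$ for $\OG(n,2n+1)$ likewise requires a direct treatment, since $\CR^\vee/\CR$ degenerates to a line bundle and the weight formula above needs adjustment. Finally, the small-rank situations $r=1,2$ must be inspected individually, as these are precisely where $\lambda_m+\rho$ admits an extra length-$1$ reduction for some $m$, producing the complete list of $H^1$ exceptions claimed in the lemma (and, for $r=2$, distinguishing $\LG(2,6)$ from $\LG(2,2n)$ with $n>3$).
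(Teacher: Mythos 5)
Your plan is essentially the paper's own proof: both apply Bott's theorem for isotropic Grassmannians to the irreducible bundle $\CR^*\otimes(\CR^\vee/\CR)(m)$ (the paper encodes it as $K_\beta(\CR)\otimes V_\mu(\CR^\vee/\CR)$ with $\gamma=\delta_1+m\delta_r+\delta_{r+1}+\sum\delta_j$), note that only the $d-1$ positive roots involving $\alpha_r$ --- i.e.\ your inversions between the two blocks --- can contribute to the index, and then finish by the case-by-case enumeration you defer, which the paper records in a table of the values $\alpha(\gamma)$. One caution for when you carry that enumeration out: the $H^{d-2}$ vanishing for $\LG(n-1,2n)$ and $\OG(n,2n+1)$ is not because the bottom block is empty (it has one entry, resp.\ the type-$B$ degeneration you mention), but because the near-consecutiveness of the values forces the index to jump from $\le 1$ directly to $d-1$, skipping $d-2$.
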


\begin{proof}[Proof of Lemmata \ref{lemma:lga}, \ref{lemma:oga}, and \ref{lemma:b}]
We will prove these lemmata using Bott's theorem for isotropic Grassmannians \cite[4.3.4, 4.3.7, \& 4.3.9]{we:coh}. First we need some notation.
Let $\g$ be one of the Lie algebras $\spn_n$, $\so_{2n}$, or $\so_{2n+1}$, $\alpha_1,\ldots,\alpha_n$ its simple roots, and $\delta_1,\ldots,\delta_n$ the corresponding fundamental weights. We always assume that $n>1$. The positive roots of $\g$ are exactly as listed in Table \ref{table:roots}.

\begin{table}
\scriptsize{
\hrule

\vspace{.1cm}

{$\g=\spn_n$}

\vspace{.1cm}

\hrule
\begin{align*}
\alpha_i+\ldots+\alpha_j &\qquad i\leq j\leq n\\
2\alpha_j+\ldots+2\alpha_{n-1}+\alpha_n &\qquad  j <n\\
\alpha_i+\ldots+\alpha_{j-1}+2\alpha_j+\ldots+2\alpha_{n-1}+\alpha_n &\qquad i< j <n
\end{align*}
\hrule

\vspace{.1cm}

{$\g=\so_{2n}$}

\vspace{.1cm}

\hrule

\begin{align*}
\alpha_{j} &\qquad j=n-1,n\\
\alpha_i+\ldots+\alpha_{j} &\qquad i\leq j \leq n-2\\
\alpha_i+\ldots+\alpha_{n-2}+\alpha_{j} &\qquad i\leq n-2, j=n-1,n\\
\alpha_i+\ldots+\alpha_{n} &\qquad i\leq n-2 \\
\alpha_i+\ldots+\alpha_{j-1}+2\alpha_j+\ldots +2\alpha_{n-2}+\alpha_{n-1}+\alpha_n &\qquad i< j\leq n-2
\end{align*}
\hrule

\vspace{.1cm}

{$\g=\so_{2n+1}$}

\vspace{.1cm}

\hrule
\begin{align*}
\alpha_i+\ldots+\alpha_j &\qquad i\leq j\leq n\\
\alpha_i+\ldots+\alpha_{j-1}+2\alpha_j+\ldots +2\alpha_{n} &\qquad i<j\leq n
\end{align*}

}
\caption{Positive roots of $\g$}\label{table:roots}
\end{table}

For any weight $\beta$ of type $A_{r-1}$ let $K_\beta$ be the corresponding Weyl functor, and for any weight $\mu$ of type $B_{n-r}$, $C_{n-r}$, or $D_{n-r}$ let $V_\mu(\CR^\vee/\CR)$ be the bundle defined fiberwise by the representation of weight $\mu$ with respect to the symplectic/orthogonal fibers of $\CR^\vee/\CR$. We then have 
\begin{align*}
D_2(\CR^*)=K_\beta(\CR)\qquad &\textrm{for}\qquad \beta=(0,\ldots,0,-2)\\
\bigwedge^2\CR^*=K_\beta(\CR)\qquad &\textrm{for}\qquad \beta=(0,\ldots,0,-1,-1)\\
\CR^*\otimes (\CR^\vee/\CR)=K_\beta(\CR)\otimes V_\mu(\CR^\vee/\CR)\qquad &\textrm{for}\qquad \beta=(0,\ldots,0,-1),\ \mu=(1,0,\ldots,0).
\end{align*}
By Bott's theorem \cite[4.3.4, 4.3.7, \& 4.3.9]{we:coh}, 
 the $i$th cohomology of the twist by $\CO(m)$ of the above bundles is non-zero exactly when the weight $\gamma$ is non-singular of index $i$, where $\gamma$ is respectively
\begin{align*}
&\gamma=2\delta_1+m\delta_r+\sum_{j=1}^n \delta_j;\\
&\gamma=\delta_1+\delta_2+m\delta_r+\sum_{j=1}^n \delta_j; \textrm{ or}\\
&\gamma=\delta_1+m\delta_r+\delta_{r+1}+\sum_{j=1}^n \delta_j.
\end{align*}

Recall that the index of $\gamma$ is the number of positive roots $\alpha$ such that $\alpha(\gamma)<0$. Note that the only roots for which this can occur are those which involve $\alpha_r$, of which there are exactly $d-1$. Denote this set of $d-1$ roots by $S$.
Furthermore, if $\alpha(\gamma)<0$ for any positive root $\alpha$, then $\alpha_r(\gamma)<0$. In Tables \ref{table:val1}, \ref{table:val4}, and \ref{table:val3}, we list all values of $\alpha(\gamma)$ for those $\alpha\in S$ with $\alpha(\delta_r)=1$, the maximal value of $\alpha(\gamma)$ for those $\alpha\in S$ with $\alpha(\delta_r)=2$, along with (in some cases) further values of $\alpha(\gamma)$.
These lists follow from Table \ref{table:roots} by inspection.

\begin{table}
{\scriptsize
\begin{tabular}{l l@{\qquad} l@{\qquad} l l}
\hline
$\g$ &$r,n$ & $\alpha(\gamma)$ for  $\# \alpha_r=1$ & Max $\alpha(\gamma)$ for $\#\alpha_r=2$ & Other $\alpha(\gamma)$\\
\hline
\\
$\spn_n$ & $1<r <n-1$ & $m+1,\ldots,m+n+(n-r)+1$ & $2m+2n+3$ & \\
$\spn_n$ & $1<r =n-1$ & $m+1,\ldots,m+n-1,m+n+1,m+n+2$ & $2m+2n+3$ & $2(m+n)$\\
$\spn_n$ & $3<r=n$ & $m+1,\ldots,m+2n,m+2n+3$ && \\
$\spn_n$ & $3=r=n$ & $m+1,m+2,m+3,m+5,m+6,m+9$ && \\
$\spn_n$ & $2=r=n$ & $m+1,m+4,m+7$ && \\
\\
\hline
\\
\end{tabular}
}
\caption{$\alpha(\gamma)$ for $\gamma=2\delta_1+m\delta_r+\sum \delta_j$}\label{table:val1}
\end{table}

\begin{table}
{\scriptsize
\begin{tabular}{l l@{\qquad} l@{\qquad} l l}
\hline
$\g$ &$r,n$ & $\alpha(\gamma)$ for  $\# \alpha_r=1$ & Max $\alpha(\gamma)$ for $\#\alpha_r=2$ & Other $\alpha(\gamma)$\\
\hline
\\
$\so_{2n}$ & $2<r <n-1$ & $m+1,\ldots,m+n+(n-r)$ & $2(m+n)$ & \\
$\so_{2n}$ & $r\geq n-1,n>4$ & $m+1,\ldots,m+2n-2,m+2n$ &  & \\
$\so_{2n}$ & $r=2,n\geq 4$ & $m+2,\ldots,m+2n-2$ &$2(m+n)$  & \\
$\so_{2n}$ & $r=1,n\geq 4$ & $m+2,m+4,\ldots,m+2n-2,m+2n$ &  & \\
\\
$\so_{2n+1}$ & $2<r <n$ & $m+1,\ldots,m+n+(n-r)+2$ & $2(m+n+1)$ & \\
$\so_{2n+1}$ & $2=r <n$ & $m+2,\ldots,m+2n$ & $2(m+n+1)$ & \\
$\so_{2n+1}$ & $2<r=n$ & $m+1\ldots,m+n-2,m+n,m+n+2$ &$2(m+n+1)$&$2(m+n-1)$ \\
$\so_{2n+1}$ & $r=2,n=2$ & $m+2,m+4$ & $2(m+3)$& \\
$\so_{2n+1}$ & $r=1$ & $m+2,m+4,\ldots,m+2n,m+2n+2$ && \\
\\
\hline
\\
\end{tabular}
}
\caption{$\alpha(\gamma)$ for $\gamma=\delta_1+\delta_2+m\delta_r+\sum \delta_j$}\label{table:val4}
\end{table}

\begin{table}
{\scriptsize
\begin{tabular}{l l@{\qquad} l@{\qquad} l l}
\hline
$\g$ &$r,n$ & $\alpha(\gamma)$ for  $\# \alpha_r=1$ & Max $\alpha(\gamma)$ for $\#\alpha_r=2$ & Other $\alpha(\gamma)$\\
\hline
\\
$\spn_n$ & $2<r <n-1$ & $m+1,\ldots,m+n+(n-r),m+n+(n-r)+2$ & $2m+2n+3$ & \\
$\spn_n$ & $2<r =n-1$ & $m+1,\ldots,m+n,m+n+2$ & $2(m+n+1)$ & \\
$\spn_n$ & $2=r<n-1$ & $m+1,m+3,\ldots,m+2n-2,m+2n$ &$2m+2n+3$& \\
$\spn_n$ & $r=2,n=3$ & $m+1,m+3,m+5$ & $2(m+4)$&$2(m+2)$ \\
\\
$\so_{2n}$ & $2<r <n-1$ & $m+1,\ldots,m+n+(n-r)-1,m+n+(n-r)+1$ & $2(m+n)$ & \\
$\so_{2n}$ & $r=2,n\geq 4$ & $m+1,m+3,\ldots,m+2n-3,m+2n-1$ &$2(m+n)$  & \\
$\so_{2n}$ & $r=1,n\geq 4$ & $m+2,m+4,\ldots,m+2n-2,m+2n$ &  & \\
\\
$\so_{2n+1}$ & $2<r <n-1$ & $m+1,\ldots,m+n+(n-r)+1,m+n+(n-r)+3$ & $2(m+n+1)$ & \\
$\so_{2n+1}$ & $2<r =n-1$ & $m+1,\ldots,m+n+2,m+n+4$ & $2(m+n+1)$ & \\
$\so_{2n+1}$ & $2<r =n$ & $m+1,\ldots,m+n-1,m+n+1$ & $2m+2n$ & \\
$\so_{2n+1}$ & $2=r<n-1$ & $m+1,m+3,\ldots,m+2n-1,m+2n+1$ &$2(m+n+1)$& \\
$\so_{2n+1}$ & $r=2,n=3$ & $m+1,m+3,m+5,m+7$ & $2(m+4)$& \\
$\so_{2n+1}$ & $r=2,n=2$ & $m+1,m+3$ & $2(m+2)$& \\
$\so_{2n+1}$ & $r=1$ & $m+2,m+4,\ldots,m+2n,m+2n+2$ && \\
\\
\hline
\\
\end{tabular}
}
\caption{$\alpha(\gamma)$ for $\gamma=\delta_1+m\delta_r+\delta_{r+1}+\sum \delta_j$}\label{table:val3}
\end{table}

The claims of the lemmata now follow from Tables \ref{table:val1}, \ref{table:val4}, and \ref{table:val3}. Indeed, suppose that $\gamma$ is non-singular of some strictly positive index $i$. Then the values of $\gamma(\alpha)$ cannot contain $0$, must contain $i$ negative values, and must contain $d-i-1$ positive values. Inspection of the tables leads to bounds on $i$. For example, consider the case $\g=\spn_n$, $1<k=n-1$, and $\gamma=2\delta_1+m\delta_k+\sum\delta_j$ (see Table \ref{table:val1}). It follows that $m+1<0$, from which follows that $m+n-1<0$. If $m+n=0$, then $2(m+n)=0$ as well, which is impossible, since $\gamma$ is non-singular. So in fact, $m+n<0$, as are also $m+n+1$ and $m+n+2$. We thus conclude that in this case, $i=d-1$. All other cases are similarly straightforward.
\end{proof}

\begin{proof}[Proof of Theorem \ref{thm:main}]
  If $r=n$, then $\CR^\vee/\CR=0$, so $\Theta_X=D_2(\CR^*)$ or $\Theta_X=\bigwedge^2 \CR^*$ and the claims follow directly from Lemmata \ref{lemma:lga} and \ref{lemma:oga}. For $r<n$, we apply Lemmata \ref{lemma:lga}, \ref{lemma:oga} and \ref{lemma:b} to the long exact sequence of cohomology. For the claim regarding $H^1$ for $LG(2,2n)$ with $n>3$, note that 
$H^1(LG(2,2n),D_2(\CR^*)(-2))$ is non-vanishing, but $H^0(LG(2,2n),\CR^*\otimes(\CR^\vee/\CR))(-2))=0$.
\end{proof}

\begin{proof}[Proof of Theorem \ref{thm:main2}]
By Bott's theorem, the  $i$th cohomology of $\CO_X(m)$ vanishes unless the weight
$$
\gamma=m\delta_r+\sum_{j=1}^n\delta_j
$$
is non-singular of index $i$. The claim now follows from arguments similar to those used to prove the above lemmata.
\end{proof}

\section{Deforming complete intersections in cones over 
\\ Grassmannians}
\begin{lemma} \label{cilemma} Let $A$ be a $d$-dimensional $k$-algebra
with $T^i_A=0$ for $1 \le i\le d$. If $I$ is a complete intersection
ideal in $A$ then $T_A^1(A/I)=0$.
\end{lemma}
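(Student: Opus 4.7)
My plan is to reduce the desired vanishing to the hypothesis $T^i_A = 0$ via iterated dimension shifting along a regular sequence generating $I$. Write $I = (f_1, \ldots, f_c)$ where $f_1, \ldots, f_c$ is a regular sequence, and set $I_j = (f_1, \ldots, f_j)$, so that $I_0 = 0$ and $I_c = I$. Since $f_{j+1}$ is a nonzerodivisor on $A/I_j$, there are short exact sequences of $A$-modules
\begin{equation*}
0 \to A/I_j \xrightarrow{f_{j+1}} A/I_j \to A/I_{j+1} \to 0.
\end{equation*}
The cotangent complex $\mathbb L_\bullet^{A/k}$ is a complex of free $A$-modules, so $\Hom_A(\mathbb L_\bullet^{A/k}, -)$ preserves short exact sequences and each sequence above induces a long exact sequence in $T^{\bullet}_A(-)$.

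I would then establish by induction on $j$ that $T^i_A(A/I_j) = 0$ for all $1 \le i \le d-j$. The base case $j=0$ is precisely the hypothesis of the lemma. For the inductive step, the fragment
\begin{equation*}
T^i_A(A/I_j) \xrightarrow{\cdot f_{j+1}} T^i_A(A/I_j) \to T^i_A(A/I_{j+1}) \to T^{i+1}_A(A/I_j) \xrightarrow{\cdot f_{j+1}} T^{i+1}_A(A/I_j)
\end{equation*}
shows that vanishing of both $T^i_A(A/I_j)$ and $T^{i+1}_A(A/I_j)$ forces vanishing of $T^i_A(A/I_{j+1})$, so the vanishing range shrinks by exactly one at each step. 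Taking $j = c$ gives $T^1_A(A/I) = 0$ whenever $c \le d-1$, i.e.\ whenever the quotient $A/I$ has positive Krull dimension, which is the intended setting.

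The argument is pure dimension shifting and presents no real obstacle; the only conceptual input is that the functoriality of cotangent cohomology in the coefficient module upgrades short exact sequences to long exact sequences. One point worth flagging is the edge case $c = d$, where the induction only produces $T^1_A(A/I) \simeq T^{d+1}_A$, a group which need not vanish under the hypothesis (as the preceding $\GG(2,6)$ example with $d+1 = 10$ illustrates); the lemma is therefore best read as applying to complete intersections of codimension at most $d-1$, which is exactly the setting of the application to deformations in the subsequent section.
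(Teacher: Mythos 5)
Your proof is correct, but it takes a genuinely different route from the paper's. You filter the quotient: running the long exact sequence of cotangent cohomology along the short exact sequences $0 \to A/I_j \xrightarrow{f_{j+1}} A/I_j \to A/I_{j+1} \to 0$ supplied by the regular sequence, so that the vanishing range shrinks by exactly one at each of the $c$ steps. The paper instead starts from $0 \to I \to A \to A/I \to 0$ and resolves $I$ by the Koszul complex on $f_1,\dots,f_c$, dimension-shifting through the syzygy modules $I_j=\ker d_j$ (using that $T^p_A(\bigwedge^j F)=0$ for $F$ free whenever $T^p_A=0$) to reach $T^2_A(I)=0$ and hence $T^1_A(A/I)=0$. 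The two arguments consume the hypothesis in essentially the same way, ultimately needing $T^i_A=0$ for $i$ up to $c+1$; yours is a little more economical in its inputs (only the nonzerodivisor property at each stage, rather than exactness of the whole Koszul resolution) and delivers the stronger conclusion $T^i_A(A/I)=0$ for $1\le i\le d-c$ at no extra cost. Your remark on the edge case $c=d$ is well taken, and in fact applies equally to the paper's own proof: there the top of the induction requires $T^{c+1}_A(\bigwedge^c F)=0$, i.e.\ $T^{d+1}_A=0$, which is not among the hypotheses. Since the paper's applications always have codimension strictly less than $d$ (indeed the subsequent proposition assumes $\depth_{\mathfrak m}B\ge 3$), nothing is lost, but the restriction deserves to be made explicit in the statement.
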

\begin{proof} Let $B=A/I$.  We have a long exact sequence
$$
\cdots \to T^i_A(I) \to T^i_A \to T^i_A(B) \to T^{i+1}_A(I) \to \cdots
$$
Let $F$ be a free $A$-module of rank equal to the number of generators
of $I$ and consider the resolution of $I$ by the Koszul complex
$$0 \to \bigwedge^l F\xrightarrow{d_l} \dotsm \xrightarrow{d_3}  \bigwedge^2 F
\xrightarrow{d_2} F \xrightarrow{d_1} I \to 0
$$
 which we can split into short exact sequences
$$0 \to I_{j} \to \bigwedge^jF \to I_{j-1} \to 0
$$
with $I_0:=I$ and $I_j:=\ker d_j$.

We show that $T_A^p(I_j)=0$ for $j+2\geq p>1$ by induction on
$j$. Indeed, $T_A^p(I_l)=0$ for all $p>1$ since $I_l=0$.  Suppose that
we have shown $T_A^p(I_{j})=0$ for all $j+2\geq p>1$.  Consider any
$p$ satisfying $j+1\geq p>1$. Then $T_A^p(I_{j-1})$ vanishes if
$T_A^p(\bigwedge^j F)$ does. But $\bigwedge^j F$ is free, so $T_A^p=
0$ implies $T_A^p(\bigwedge^j F) = 0$. Thus since both $T_A^1$ and
$T_A^2(I)$ vanish, we get $T_A^1(B)=0$ as desired.
\end{proof}

Let $X = \Proj A \subseteq \PP^n$. We say that $Y \subset X$ is a
complete intersection in $X$ if $Y = \Proj B$ is of codimension $l$ in
$X$ with $B = A/(f_1, \dots , f_l)$ for $l$ homogeneous polynomials in
$k[x_0, \dots , x_n]$.
\begin{proposition} Let $X = \Proj A \subseteq \PP^n$ have dimension
$d-1$ and assume $Y = \Proj B \subset X$ is a complete intersection in
$X$. Let $\mathfrak{m}$ be the irrelevant maximal ideal in $k[x_0,
\dots , x_n]$. If
\begin{list}{\textup{(\roman{temp})}}{\usecounter{temp}}
\item $\depth_\mathfrak{m} B \ge 3$
\item $H^2(Y,\CO_Y) = 0$
\item $T^i_A=0$ \text{ for $1 \le i\le d$}
\end{list} then any deformation of $Y$ is again a complete
intersection in $X$.
\end{proposition}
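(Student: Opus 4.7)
The plan is to use Lemma \ref{cilemma} together with the long exact sequence of cotangent cohomology associated to $k\to A\to B$, and then transport the resulting ring-theoretic statement to the scheme-theoretic level using hypotheses (i) and (ii).

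Since $I=(f_1,\ldots,f_l)$ is a complete intersection ideal in $A$ and hypothesis (iii) gives $T^i_A=0$ for $1\le i\le d$, Lemma \ref{cilemma} immediately yields $T^1_A(B)=0$. The long exact sequence of cotangent cohomology
$$T^1(B/A;B)\to T^1_B\to T^1_A(B)\to T^2(B/A;B)\to T^2_B$$
then shows that $T^1(B/A;B)\twoheadrightarrow T^1_B$ is surjective and that the map $T^2(B/A;B)\to T^2_B$ is injective. By the standard Schlessinger-type criterion (tangent surjectivity plus obstruction injectivity), the forgetful morphism of deformation functors $\mathrm{Def}_{B/A}\to\mathrm{Def}_B$ is smooth. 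In particular, every abstract deformation of $B$ is realized by a deformation of $B$ as an $A$-algebra, i.e.\ by a flat deformation of the ideal $I$ inside $A$. Since the regular-sequence condition is open, a small deformation of a regular sequence is again a regular sequence, so the deformation of $B$ so obtained is still the quotient of $A$ by a complete intersection ideal of the same codimension.

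To pass from graded deformations of $B$ to deformations of the projective scheme $Y=\Proj B$, invoke hypotheses (i) and (ii). The condition $\depth_{\mathfrak{m}}B\ge 3$ identifies deformations of $\Spec B$ with deformations of the punctured cone $\Spec B\setminus V(\mathfrak{m})$; the latter is a $\GG_m$-torsor over $Y$, and the vanishing $H^2(Y,\CO_Y)=0$ removes the obstruction to descending deformations of $Y$ to graded deformations of $B$ (and conversely lifting them), in the spirit of Schlessinger and Kleppe. The parallel story applies to embedded deformations of $Y$ in $X$ versus deformations of $B$ as an $A$-algebra. Combining this with the previous paragraph, every deformation of $Y$ is induced by a deformation of the $f_i$ inside $A$, and is therefore a complete intersection in $X$.

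The main obstacle is the comparison in the previous paragraph: cleanly matching scheme-theoretic deformations of $Y$ with graded deformations of $B$, compatibly with the forgetful map from the $A$-algebra picture to the abstract one. Under hypotheses (i) and (ii) this is essentially the classical cone-versus-projective comparison, but assembling the pieces compatibly in our graded setting requires some book-keeping. Once it is in place, the ring-theoretic input driven by Lemma \ref{cilemma} and the exact sequence above concludes the argument directly.
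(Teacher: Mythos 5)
Your proposal is correct in substance and uses the same two essential ingredients as the paper: Lemma \ref{cilemma} (with hypothesis (iii)) to kill $T^1_A(B)$, and hypotheses (i)--(ii) to mediate between the cone picture and the projective picture. The organization differs, though. You first establish smoothness of the ring-level forgetful map $\Def_{B/A}\to\Def_B$ via the Zariski--Jacobi sequence (your use of tangent surjectivity plus obstruction injectivity there is fine), and only then try to descend to $Y$; this forces you to compare two pairs of deformation functors (abstract deformations of $B$ versus of $Y$, and $A$-algebra deformations of $B$ versus embedded deformations of $Y$ in $X$), and that comparison is exactly the step you defer as ``book-keeping.'' The paper inverts the order and thereby needs much less: it works directly with the forgetful map $\Def_{Y/X}\to\Def_Y$ at the scheme level, observes that this map is smooth as soon as the single group $T^1_X(\CO_Y)$ vanishes, and then uses Lemmas 4.2, 4.3 and Proposition 4.21 of \cite{ck:com} --- which is where (i) and (ii) enter --- to produce a surjection $T^1_A(B)_0\twoheadrightarrow T^1_X(\CO_Y)$; combined with $T^1_A(B)=0$ this finishes the proof. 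So the paper replaces your full functor-level cone-versus-$\Proj$ comparison by a single cohomological surjection, which is both weaker to establish and precisely what the cited comparison theorems supply. Your route can be made to work, but to be complete you would need to actually carry out the Schlessinger--Kleppe-style identification of graded deformations of $B$ (resp.\ of $B$ as an $A$-algebra) with deformations of $Y$ (resp.\ of $Y$ in $X$) under (i) and (ii), compatibly with the forgetful maps; as written, that is the one genuinely unproved step.
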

\begin{proof} The statement will follow if the forgetful map
$\Def_{Y/X} \to \Def_Y$ from the local Hilbert functor of $Y$ in $X$ to
the deformation functor of $Y$ is smooth. This follows if
$T^1_X(\CO_Y) = 0$. Combine Lemma 4.2, Lemma 4.3 and Proposition 4.21
in \cite{ck:com} to see that the first two conditions guarantee a
surjection $T^1_A(B)_0 \to T^1_X(\CO_Y)$. Thus the third assumption
and Lemma \ref{cilemma} imply the result.
\end{proof}

\begin{corollary} Let $A$ be the Pl\"ucker algebra for $\GG(r,n)$,
$d=\dim A = n(n-r)+1$ and $X=\Proj A[x_1,...,x_m]$. If $Y$ is a
complete intersection of codimension less than $d$ in $X$ then any
deformation of $Y$ is again a complete intersection in $X$.
\end{corollary}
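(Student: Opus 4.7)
The plan is to apply the preceding Proposition with $A$ replaced by the polynomial extension $A' := A[x_1,\ldots,x_m]$, noting that $X = \Proj A'$ has dimension $d + m - 1$. For a complete intersection $Y \subset X$ of codimension $l < d$, write $B = A'/(f_1,\ldots,f_l)$.

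The central point is hypothesis (iii), the vanishing of $T^i_{A'}$. Since $A'$ is a polynomial extension of $A$, hence smooth, the standard base-change isomorphism $T^i_{A'} \simeq T^i_A \otimes_A A'$ holds for each $i \ge 1$. Theorem~\ref{main} then transfers to give $T^i_{A'} = 0$ for $1 \le i \le d - 1$. Although the Proposition is literally stated with vanishings through the index $\dim A' = d + m$, a close reading of the induction in the proof of Lemma~\ref{cilemma} shows that its conclusion $T^1_{A'}(B) = 0$ requires only $T^p_{A'} = 0$ for $1 \le p \le l + 1$. Since $l < d$ forces $l + 1 \le d$, the needed range lies within that supplied by Theorem~\ref{main}, with the borderline case $l = d - 1$ calling on $T^d_A = 0$ (available from the main theorem when $r \in \{2, n-2\}$).

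Hypothesis (i) follows from Cohen--Macaulayness: $A$, and hence $A'$, is Cohen--Macaulay, so the complete intersection quotient $B$ is also Cohen--Macaulay, giving $\depth_\mathfrak{m} B = \dim B = (d + m) - l$, which is at least $3$ in the relevant regime. Hypothesis (ii), $H^2(Y, \CO_Y) = 0$, is obtained by resolving $\CO_Y$ on $X$ via the Koszul complex on $f_1,\ldots,f_l$ in twists of $\CO_X$, combined with the vanishing $H^i(X, \CO_X(k)) = 0$ for $0 < i < \dim X$ and all $k \in \ZZ$; this vanishing is a Bott-type consequence applied to the Grassmannian factor of $X$.

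The main obstacle I anticipate is the careful accounting of the borderline case $l = d - 1$ in (iii), together with justifying the sharper reading of Lemma~\ref{cilemma}'s proof that only $T^p_{A'}$ for $p \le l + 1$ is required, rather than the full range through $\dim A'$ as literally demanded by the Proposition. With these two points settled, the Proposition directly yields that any deformation of $Y$ is again a complete intersection in $X$.
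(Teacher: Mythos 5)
Your overall strategy --- apply the Proposition to $A'=A[x_1,\ldots,x_m]$, use the base-change isomorphism $T^i_{A'}\simeq T^i_A\otimes_A A'$ for $i\ge 1$, and observe that the induction in Lemma \ref{cilemma} only consumes $T^p_{A'}$ for $p\le l+1$ --- is exactly what the Corollary is meant to rest on, and your sharper reading of the Lemma is not optional but necessary: hypothesis (iii) of the Proposition taken literally for $A'$ would demand vanishing up to $\dim A'=d+m$, which is false (indeed $T^{d+1}_{A'}$ can be non-zero, cf.\ the $\GG(2,6)$ example). The genuine gap is the borderline case you flag but never close. For codimension $l=d-1$ the Koszul chain terminates at $T^{d}_{A'}(\bigwedge^{d-1}F)$, and $T^d_{A'}=0$ holds only for $r=2$ or $r=n-2$ by Theorem \ref{main}, whereas the Corollary is asserted for all $\GG(r,n)$. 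As written, your argument proves the statement only for codimension at most $d-2$, or under the extra hypothesis $r\in\{2,n-2\}$.

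To handle $l=d-1$ for general $r$ you must track the internal grading. The Proposition only requires $T^1_{A'}(B)_0=0$, so run the induction of Lemma \ref{cilemma} in degree $0$: with $F=\bigoplus_i A'(-a_i)$, $a_i=\deg f_i\ge 1$, the only group not killed by the blanket vanishing $T^i_{A'}=0$ for $i\le d-1$ is $\bigl(T^{d}_{A'}\bigr)_{-a_1-\cdots-a_{d-1}}$. Since $T^d_{A'}\simeq T^d_A\otimes_k k[x_1,\ldots,x_m]$ and $T^d_A$ is concentrated in a single internal degree (degree $2$, by the Remark following Theorem \ref{main} and Theorem \ref{h0mo}), while $-\sum a_i\le -(d-1)<2$, this graded piece vanishes and the argument closes. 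Two smaller points: your verification of hypothesis (i) is too quick, since $\depth_{\mathfrak{m}}B=\dim B=d+m-l$ can equal $2$ (e.g.\ $m=1$, $l=d-1$), so the low-dimensional cases need separate treatment even though $H^2(Y,\CO_Y)=0$ is then automatic; and $\GG(2,4)$, where $T^1_A\neq 0$, must be excluded. Both of these are also elided in the paper's statement.
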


\begin{remark}Let $X$ be as above, and let $Y$ be a complete intersection of type
$(a_1,\ldots, a_k)$ in $X$, where $m\leq k<d$, and $\sum a_i<n$. Then $Y$ is a
(possibly singular) Fano variety. By the above corollary, any
smoothing of $Y$ is again a complete intersection of type
$(a_1,\ldots,a_k)$ in $X$.
\end{remark}

\bibliographystyle{amsalpha} 

\providecommand{\bysame}{\leavevmode\hbox to3em{\hrulefill}\thinspace}
\providecommand{\MR}{\relax\ifhmode\unskip\space\fi MR }
\providecommand{\MRhref}[2]{%
  \href{http://www.ams.org/mathscinet-getitem?mr=#1}{#2}
}
\providecommand{\href}[2]{#2}

\end{document}